\numberwithin{equation}{section}
\numberwithin{figure}{section}
\theoremstyle{plain}
\newtheorem{theorem}{Theorem}[section]
\newtheorem{lemma}[theorem]{Lemma}
\theoremstyle{remark}
\newtheorem{rem}[theorem]{Remark}
\newenvironment{remark}{\begin{rem}}{\hfill$\lozenge$
\end{rem}}
\DeclareMathOperator\Real{Re}
\renewcommand{\Re}{\Real}
\DeclareMathOperator\dom{dom}
\DeclareMathOperator\supp{supp}
\DeclareMathOperator\sign{sign}
\newcommand{\ov}{\overline}
\newcommand{\ds}{\displaystyle}
\newcommand{\rd}{\,\mathrm{d}}
\newcommand{\cF}{\mathcal F}
\newcommand{\cL}{\mathcal L}
\newcommand{\CC}{\mathbb C}
\newcommand{\NN}{\mathbb N}
\newcommand{\RR}{\mathbb R}
\newcommand{\ZZ}{\mathbb Z}
\newcommand{\frl}{\mathfrak{l}}
\newcommand{\defeq}{\mathrel{\mathop:}=}
\newcounter{counter_a}
\newenvironment{myenum}{\begin{list}{{\rm(\roman{counter_a})}}%
{\usecounter{counter_a}
\setlength{\itemsep}{0.5ex}\setlength{\topsep}{0.7ex}
\setlength{\leftmargin}{5ex}\setlength{\labelwidth}{5ex}}}{\end{list}}
\newcounter{counter_steps}
\newcommand{\tell}{\tilde{\ell}}
\newcommand{\tellL}[1]{\tilde{\ell}_{{\rm L},#1}}
\newcommand{\tellF}[1]{\tilde{\ell}_{{\rm F},#1}}
\newcommand{\tellM}[1]{\tilde{\ell}_{{\rm M},#1}}
\newcommand{\Li}{\operatorname{Li}}
\newcommand{\rmO}{\mathrm{O}}
\newcommand{\rmo}{\mathrm{o}}
\newcommand{\FWHM}{\operatorname{FWHM}}
\newcommand{\Lkmin}{L_{k,\rm min}}
\newcommand{\Lkmax}{L_{k,\rm max}}
\newcommand{\tLL}[1]{\widetilde{L}_{{\rm L},#1}}
\newcommand{\tLF}[1]{\widetilde{L}_{{\rm F},#1}}
\newcommand{\tLM}[1]{\widetilde{L}_{{\rm M},#1}}
\newcommand{\tLT}{\widetilde{L}_T}
\newcommand{\deltakmax}{\delta_{k,\rm max}}
\newcommand{\deltaonemax}{\delta_{1,\rm max}}
\begin{document}

\title[Path Laplacian operators and superdiffusive processes]{Path Laplacian
operators and \\ superdiffusive processes on graphs. I. \\ One-dimensional case}

\author{Ernesto Estrada, Ehsan Hameed, Naomichi Hatano, Matthias Langer}

\address{Department of Mathematics \& Statistics, University of Strathclyde, \newline
26~Richmond Street, Glasgow G1~1XH, UK}
\address{Institute of Industrial Science, University of Tokyo,
4-6-1 Komaba, Meguro, Tokyo 153-8505, Japan}

\begin{abstract}
We consider a generalization of the diffusion equation on graphs.
This generalized diffusion equation gives rise to both normal and superdiffusive
processes on infinite one-dimensional graphs. The generalization is
based on the $k$-path Laplacian operators $L_{k}$, which account
for the hop of a diffusive particle to non-nearest neighbours in a
graph. We first prove that the $k$-path Laplacian operators are self-adjoint.
Then, we study the transformed $k$-path Laplacian operators using
Laplace, factorial and Mellin transforms. We prove that the generalized
diffusion equation using the Laplace- and factorial-transformed operators
always produce normal diffusive processes independently of the parameters
of the transforms. More importantly, the generalized diffusion equation
using the Mellin-transformed $k$-path Laplacians $\sum_{k=1}^{\infty}k^{-s}L_{k}$
produces superdiffusive processes when $1<s<3$.
\\[1ex]
\textit{2010 Mathematics Subject Classification:} 47B39; 47B25, 60J60, 05C99
\\[0.5ex]
\textit{Keywords:} $k$-path Laplacian, anomalous diffusion
\end{abstract}

\maketitle

\section{Introduction}

\noindent
Superdiffusive processes are ubiquitous in many natural systems, ranging
from physical to biological and man-made ones. They refer to those
anomalous diffusive processes where the mean square displacement (MSD)
of the diffusive particle scales nonlinearly with time.  We refer the
reader to \cite{Anomalous diffison review} and the references therein
for the background and applications of anomalous diffusion.  The superdiffusive
processes have been modelled in many different ways (see \cite{Anomalous diffison review}
for a review and analysis). The most used models, however, are based
on random walks with L\'evy flights (RWLF) \cite{Anomalous_Levy flights}
and on the use of the fractional diffusion equation (FDE) \cite[Chapter~11]{Fractional Models}.
There are different types of definitions of fractional derivative,
such as the Caputo fractional operator and the Riemann--Liouville
fractional operator \cite{Podlubny}, which then have different interpretations
and adapt differently to the different physical phenomena studied
with them (see \cite{fractional diffusion_1,fractional diffusion_2}).

Recently, anomalous diffusion of ultracold atoms has been observed in
a discrete one-dimensional system \cite{cold_atoms}.
The model considered in that work for explaining the superdiffusive process
is a simple diffusion model in which the particles are located in real space,
each having a velocity which fluctuates in time due to interaction with a bath.
Then, after some time the particles' position is distributed
in a non-Gaussian way and the full width at half maximum (FWHM)
scales as a power-law of the time with a signature characteristic
of superdiffusion.  The mathematical framework used to describe this
anomalous diffusion was based on the FDE.  However, an alternative
view of this process is possible.  First, we can consider that the
diffusive particle is diffusing in a one-dimensional discrete space.
Then, we can consider that the diffusive particle is not only hopping
to its nearest neighbours in the 1D lattice, but to any other point
of it with a probability that scales with the distance between the
two places.  In the current work we prove analytically that such kind
of processes can give rise to superdiffusion under certain conditions.
We should remark that existence of such long-range hops in diffusive process
has been well documented since the 1990s on experimental basis of
different nature.  First, the group of G.~Ehrlich
\cite{senft1995long} observed experimentally significant contributions
to the thermodynamical properties of the self-diffusion of weakly
bounded Pd atoms from jumps spanning second and third nearest-neighbours
in the metallic surface.  Since then, the role
of long jumps in adatom and admolecules diffusing on metallic surfaces
has been confirmed in many different systems \cite{yu2013single,ala2002collective}.

The study of diffusion on graphs is a well-established physico-mathematical theory based
on the graph-theoretic version of the diffusion equation
\cite{Diffusion on graphs,Diffusion on graphs 2}
\begin{align}
  \frac{\rd}{\rd t}u(t) & =-Lu(t),\label{eq:diff_1}\\[1ex]
  u(0) & =u_{0},\label{eq:diff_2}
\end{align}
where $L$ --- the discrete Laplacian --- is defined via the adjacency
matrix $A$ of the graph and the diagonal matrix of vertex degrees
$K$ as $L=K-A$ \cite{Mugnolo book,Laplacian_1,Laplacian_2}. The
Laplacian matrix has been extended to infinite, connected and locally
finite graphs and studied as an operator in the Hilbert space $\ell^2$ over the vertices
\cite{Laplacian operator_1,Laplacian operator_2,jorgensen08,Laplcian operator_4,
Laplacian operator_5,weber10,wojciechowski_PhD_thesis,HKLW12}.
Although RWLF and the FDE have been applied to study diffusion on
graphs (see for instance \cite{Levy flights graphs,fractional diffusion graphs}),
the question that arises here is whether is it possible to design
a simple graph-theoretic, physically sound and mathematically elegant
method based on a generalization of the Laplacian operator in \eqref{eq:diff_1}
to account for the superdiffusive process observed in physical phenomena.
An appropriate scenario for this generalization is to consider that
the diffusive particle can hop not only to its nearest neighbours --- as
controlled by $L$ in \eqref{eq:diff_1} --- but to any other node of
the graph, with a probability that decays with the increase of the
shortest path distance separating the node in which the particle is
currently located to the one to which it will hop. A generalization
of the Laplacian matrix --- known as the $k$-path Laplacian --- that
takes into account such long-range hops of the diffusive particle
has been recently considered for finite undirected graphs \cite{path Laplacians}.

The aim of this article is twofold. First, we extend the $k$-path
Laplacians $L_{k}$ \cite{path Laplacians} to consider connected
and locally finite infinite graphs. We prove here that these operators
are self-adjoint. We also study the transformed $k$-path Laplacian
operators using Laplace, factorial and Mellin transforms. We then
study an infinite linear chain and obtain analytical expressions for
the transformed $k$-path Laplacians operators as well as for the
exponential operators of both, the $k$-path Laplacians and their
transformations. Second, we plug this generalized Laplacian operators
into the graph-theoretic diffusion equation (\ref{eq:diff_1}--\ref{eq:diff_2})
to obtain a generalized diffusion equation for graphs. We prove that
when the Laplace- and factorial-transformed operators are used in
the generalized diffusion equation, the diffusive processes observed
are always normal independently of the parameters of the transforms.
For the Mellin-transformed $k$-path Laplacians $\sum_{k=1}^{\infty}k^{-s}L_{k}$
we find that the diffusion is normal only when $s>3$.
When $1<s<3$, however, the time evolution is superdiffusive with the superdiffusive
exponent being $\kappa=\frac{2}{s-1}$, which leads to arbitrary values
for $\kappa$ in $(1,\infty)$. We remind that in general we can find
that ${\rm MSD}\sim t^{\kappa}$, where the diffusion is \textit{normal}
when $\kappa=1$, while it is a superdiffusive process when $\kappa>1$.
The particular case when $\kappa=2$ is known as \textit{ballistic}
diffusion, which is characterized by the fact that at small times
the particles are not hindered yet by collisions and diffuse very fast.
In a follow-up paper (Part II) we shall study the two-dimensional situation.

\section{The $k$-path Laplacian operators}

\noindent
In this work we always consider $\Gamma=(V,E)$ to
be an undirected finite or infinite graph with vertices $V$ and edges $E$.
We assume that $\Gamma$ is connected and locally finite
(i.e.\ each vertex has only finitely many edges emanating from it).
Let $d$ be the distance metric on $\Gamma$,
i.e.\ $d(v,w)$ is the length of the shortest path from $v$ to $w$,
and let $\delta_{k}(v)$ be the $k$-path degree of the vertex $v$,
i.e.\
\begin{align}
\delta_{k}(v)\defeq\#\{w\in V:d(v,w)=k\}.
\end{align}
Since $\Gamma$ is locally finite, $\delta_{k}(v)$ is finite for
every $v\in V$. Denote by $C(V)$ the set of all complex-valued functions
on $V$ and by $C_{0}(V)$ the set of complex-valued functions on
$V$ with finite support. Moreover, let $\ell^{2}(V)$ be the Hilbert
space of square-summable functions on $V$ with inner product
\begin{align}
\langle f,g\rangle=\sum_{v\in V}f(v)\ov{g(v)},\qquad f,g\in\ell^{2}(V).
\end{align}
In $\ell^{2}(V)$ there is a standard orthonormal basis consisting
of the vectors $e_{v}$, $v\in V$, where
\begin{equation}\label{defev}
  e_{v}(w)\defeq\begin{cases}
    1 & \text{if \ensuremath{w=v}},\\[0.5ex]
    0 & \text{otherwise}.
\end{cases}
\end{equation}

Let $\cL_{k}$ be the following mapping from $C(V)$ into itself:
\begin{equation}\label{eq:path_Laplacian}
  \bigl(\cL_{k}f\bigr)(v)\defeq\sum_{w\in V:\,d(v,w)=k}\bigl(f(v)-f(w)\bigr),\qquad f\in C(V).
\end{equation}
This means that by replacing $L$ in \eqref{eq:diff_1} by $\cL_k$
in \eqref{eq:path_Laplacian}
we obtain a diffusive process in which the diffusive particle hops
to nodes which are separated by $k$ edges from its current location.
This represents a natural extension of the idea of diffusion on graphs
where the particle can only hops to nearest neighbours from its current
position. As it has been analysed in \cite{path Laplacians}, the
so-called $k$-path Laplacian naturally extends the concept of graph
connectivity, i.e.\ whether a graph is connected or not, to the $k$-connectivity,
which indicates whether every node in the graph can be reached by
a particle which is $k$-hopping from node to node in the graph.

On the vectors $e_v$ it acts as follows:
\begin{equation}\label{Lkev}
  (\cL_k e_v)(w) = \begin{cases}
    \delta_{k}(v) & \text{if \ensuremath{w=v}},\\[0.5ex]
    -1 & \text{if \ensuremath{d(v,w)=k}},\\[0.5ex]
    0 & \text{otherwise}.
  \end{cases}
\end{equation}

We define $\Lkmin$ and $\Lkmax$,
the \emph{minimal and maximal $k$-path Laplacians}, as the restrictions of $\cL_k$ to
\[
  \dom(\Lkmin) = C_0(V) \qquad\text{and}\qquad
  \dom(\Lkmax) = \bigl\{ f\in\ell^{2}(V):\cL_{k}f\in\ell^{2}(V)\bigr\},
\]
respectively.
Clearly, $e_v\in\dom(\Lkmin)$, and we obtain from \eqref{Lkev} that
\begin{equation}
  \bigl\|\Lkmin e_{v}\bigr\|
  = \sqrt{\bigl(\delta_{k}(v)\bigr)^{2}+\delta_{k}(v)}
  = \begin{cases}
    0 & \text{if \ensuremath{\delta_{k}(v)=0}},\\[2ex]
    \ds\delta_{k}(v)\sqrt{1+\frac{1}{\delta_{k}(v)}}\quad &
    \text{if \ensuremath{\delta_{k}(v)>0}}.
  \end{cases}\label{normev}
\end{equation}
First we show that $\Lkmin^*=\Lkmax$.
To this end let $f\in C_0(V)$ and $g\in C(V)$, let $V_{00}$ be the support of $f$ and set
\begin{equation}\label{defV0}
  V_{0} \defeq V_{00}\cup\bigl\{ v\in V:\exists\,w\in V_{00}\;\;
  \text{such that}\;\;d(v,w)=k\bigr\},
\end{equation}
which is a finite set.
Then $\supp\cL_k f\subset V_0$ and the following relation holds:
\begin{align}
  & \sum_{v\in V}(\cL_k f)(v)\ov{g(v)}
  = \sum_{v\in V_0}(\cL_k f)(v)\ov{g(v)}
  = \sum_{\substack{v,w\in V_0: \\[0.3ex] d(v,w)=k}}
  \bigl(f(v)-f(w)\bigr)\ov{g(v)}
  \notag\\[1ex]
  &= \frac{1}{2}\left[\,\sum_{\substack{v,w\in V_{0}: \\[0.3ex] d(v,w)=k}}
  \bigl(f(v)-f(w)\bigr)\ov{g(v)}+\sum_{\substack{v,w\in V_{0}: \\[0.3ex] d(v,w)=k}}
  \bigl(f(w)-f(v)\bigr)\ov{g(w)}\right]
  \notag\\[1ex]
  &= \frac{1}{2}\sum_{\substack{v,w\in V_{0}: \\[0.3ex] d(v,w)=k}}
  \bigl(f(v)-f(w)\bigr)\ov{\bigl(g(v)-g(w)\bigr)}
  \label{formfg} \displaybreak[0]\\[1ex]
  &= \frac{1}{2}\left[\,\sum_{\substack{v,w\in V_{0}: \\[0.3ex] d(v,w)=k}}
  f(v)\ov{\bigl(g(v)-g(w)\bigr)}+\sum_{\substack{v,w\in V_{0}: \\[0.3ex] d(v,w)=k}}
  f(w)\ov{\bigl(g(w)-g(v)\bigr)}\right]
  \notag\displaybreak[0]\\[1ex]
  &= \sum_{\substack{v,w\in V_{0}: \\[0.3ex] d(v,w)=k}}f(v)\ov{\bigl(g(v)-g(w)\bigr)}
  = \sum_{v\in V_{00}}f(v)\ov{(\cL_k g)(v)}
  \notag\\[1ex]
  &= \sum_{v\in V}f(v)\ov{(\cL_k g)(v)}.
  \label{Lkadj}
\end{align}
Let $g\in\dom(\Lkmax)$.
It follows from \eqref{Lkadj} that
\[
  \langle \Lkmin f,g\rangle = \langle f,\Lkmax g\rangle
\]
for all $f\in\dom(\Lkmin)$, which implies that $g\in\dom(\Lkmin^*)$.
Now let $g\in\dom(\Lkmin^*)$.
For each $v\in V$ we obtain from \eqref{Lkadj} with $f=e_v$ that
\begin{align*}
  \ov{(\Lkmin^*g)(v)} &= \langle e_v,\Lkmin^*g\rangle
  = \langle \Lkmin e_v,g\rangle
  = \sum_{w\in V}(\cL_k e_v)(w)\ov{g(w)}
  \\[1ex]
  &= \sum_{w\in V}e_v(w)\ov{(\cL_k g)(w)} = \ov{(\cL_k g)(v)},
\end{align*}
which implies that $\Lkmin^*g=\cL_k g$.
Since $\Lkmin^*g\in\ell^2(V)$ by the definition of the adjoint, it follows
that $g\in\dom(\Lkmax)$.  Hence $\Lkmin^*=\Lkmax$.

Since $\Lkmax$ is an extension of $\Lkmin$, it follows that $\Lkmin$
is a symmetric operator.
Moreover, for $f=g$ we obtain from \eqref{formfg} that
\begin{equation}
  \bigl\langle\Lkmin f,f\bigr\rangle
  = \frac{1}{2}\sum_{\substack{v,w\in V_{0}: \\[0.3ex] d(v,w)=k}}
  \bigl|f(v)-f(w)\bigr|^{2},
  \label{formf}
\end{equation}
where $V_0$ is as in \eqref{defV0};
this shows that $\Lkmin$ is a non-negative operator.

We say that a subset $V_{0}$ of $V$ is \emph{$k$-connected} if
each pair $v,w\in V_{0}$ is connected by a $k$-hopping walk. The
set $V_{0}\subset V$ is called a \emph{$k$-connected component} of $V$ if
$V_{0}$ is a maximal $k$-hopping connected subset of $V$. If $V_{0}\subset V$
is a $k$-hopping component, then $C(V_{0})$ considered as a subspace
of $C(V)$ is $\cL_{k}$-invariant.

\begin{lemma}\label{le:maxprincple}
Let $V_{0}$ be a $k$-connected component
of\, $V$ and let $f\in C(V_{0})$ be real-valued and bounded such
that $f$ attains its supremum. If
\begin{equation}\label{nonpos}
  \bigl(\cL_{k}f\bigr)(v)\le0\quad\text{for every }v\in V_{0},
\end{equation}
then $f$ is constant on $V_{0}$.
\end{lemma}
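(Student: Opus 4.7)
The plan is to run a standard maximum principle / propagation of maxima argument. Set $M\defeq\sup_{v\in V_{0}}f(v)$ and fix some $v^{*}\in V_{0}$ with $f(v^{*})=M$, which exists by hypothesis. The goal is to show that the set $\{v\in V_{0}:f(v)=M\}$ exhausts all of $V_{0}$.

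The first step is to exploit the one-sided inequality at $v^{*}$. By the definition in \eqref{eq:path_Laplacian},
\[
  (\cL_{k}f)(v^{*})=\sum_{w\in V:\,d(v^{*},w)=k}\bigl(f(v^{*})-f(w)\bigr)\le 0.
\]
Note first that every $w$ appearing in the sum automatically lies in $V_{0}$: since $V_{0}$ is a \emph{maximal} $k$-connected subset of $V$, and $v^{*}\in V_{0}$ is joined to $w$ by a single $k$-hop, adjoining $w$ to $V_{0}$ would still give a $k$-connected set, forcing $w\in V_{0}$. Hence each summand equals $M-f(w)\ge 0$, while the total is nonpositive. It follows that every summand vanishes, giving $f(w)=M$ for all $w$ with $d(v^{*},w)=k$.

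The second step is to iterate. Each such $w$ is itself a maximizer of $f$ on $V_{0}$ and still satisfies $(\cL_{k}f)(w)\le 0$ by hypothesis, so the same reasoning propagates the equality $f=M$ to every vertex at $k$-distance from $w$. The third step is to combine this with the $k$-connectedness of $V_{0}$: for any $u\in V_{0}$ there exists, by definition, a $k$-hopping walk $v^{*}=v_{0},v_{1},\ldots,v_{n}=u$ with $d(v_{i-1},v_{i})=k$, and an induction on $i$ applying the propagation step yields $f(v_{i})=M$ for every $i$, so in particular $f(u)=M$. This shows $f\equiv M$ on $V_{0}$.

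There is no substantial obstacle in this argument; the only point requiring a moment's thought is the closedness of a $k$-connected component under $k$-neighbours, which is what keeps the summation inside $V_{0}$ and makes the propagation step work without having to control values of $f$ outside $V_{0}$.
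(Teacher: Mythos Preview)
Your argument is correct and is essentially the same maximum-principle reasoning as in the paper: the paper phrases it as a contradiction (find a maximizer $v_{0}$ with a $k$-neighbour $v_{1}$ where $f$ is strictly smaller and get $(\cL_{k}f)(v_{0})>0$), whereas you run the equivalent direct propagation-of-maxima version. Your explicit observation that a $k$-connected component is closed under $k$-neighbours is exactly what underlies both the paper's implicit choice of $v_{0},v_{1}$ and your inductive step.
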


\begin{proof}
Assume that $f$ is not constant. Then there exist $v_{0},v_{1}\in V_{0}$
such that
\begin{align*}
 & f(v_{0})=\max\{f(v):v\in V_{0}\},\\[1ex]
 & f(v_{1})<f(v_{0}),\qquad d(v_{1},v_{0})=k.
\end{align*}
This implies that
\[
  \bigl(\cL_{k}f\bigr)(v_{0}) = f(v_{0})-f(v_{1})+\sum_{\substack{w\ne v_{1}:\\[0.3ex]
  d(w,v_{0})=k}}\bigl(f(v_{0})-f(w)\bigr)
  > 0,
\]
which is a contradiction to \eqref{nonpos}.  Hence $f$ is constant on $V_{0}$.
\end{proof}

Next we show that $\Lkmin$ is actually essentially self-adjoint;
see, e.g.\ \cite{jorgensen08,weber10,wojciechowski_PhD_thesis} for the case $k=1$.

\begin{theorem}
The operator $\Lkmin$ is essentially self-adjoint and hence $\Lkmax$
is equal to the closure of $\Lkmin$.
\end{theorem}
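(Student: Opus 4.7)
The plan is to invoke a standard deficiency-type criterion: a densely defined, symmetric, non-negative operator $T$ is essentially self-adjoint precisely when $\ker(T^{*}+I)=\{0\}$. The material preceding the theorem already establishes that $\Lkmin$ is densely defined (its domain $C_{0}(V)$ contains every $e_{v}$), symmetric with $\Lkmin^{*}=\Lkmax$, and non-negative (via the quadratic-form identity \eqref{formf}). So it suffices to prove that any $g\in\dom(\Lkmax)$ satisfying $\cL_{k}g=-g$ vanishes identically.

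Given such a $g$, I would first reduce to the real case: since $\cL_{k}$ preserves real-valuedness, the real and imaginary parts of $g$ separately belong to $\dom(\Lkmax)$ and satisfy the same equation, so I may assume $g$ is real-valued. Next I would exploit $g\in\ell^{2}(V)$: for every $\varepsilon>0$ the set $\{v\in V:|g(v)|>\varepsilon\}$ is finite, so if $M\defeq\sup_{v\in V}g(v)>0$ then $\{v:g(v)>M/2\}$ is a finite non-empty set on which $g$ attains the value $M$ at some vertex $v_{0}$.

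The heart of the argument is then a single-point maximum-principle computation, exactly in the spirit of Lemma~\ref{le:maxprincple}: by \eqref{eq:path_Laplacian},
\[
  (\cL_{k}g)(v_{0})=\sum_{w:\,d(v_{0},w)=k}\bigl(g(v_{0})-g(w)\bigr)\ge 0,
\]
because $g(v_{0})=M\ge g(w)$ for every $w$, whereas the eigenvalue equation forces $(\cL_{k}g)(v_{0})=-g(v_{0})=-M<0$. This contradiction shows $\sup g\le 0$, and running the same argument with $-g$ in place of $g$ (which also lies in $\dom(\Lkmax)$ and satisfies $\cL_{k}(-g)=-(-g)$) yields $\inf g\ge 0$, hence $g\equiv 0$. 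This proves $\ker(\Lkmax+I)=\{0\}$, and the final assertion $\overline{\Lkmin}=\Lkmax$ follows automatically since $\overline{\Lkmin}=\Lkmin^{**}=\Lkmin^{*}=\Lkmax$.

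The only delicate point in this scheme is the attainment of the supremum, without which the maximum-principle computation cannot even be initiated; the $\ell^{2}$-hypothesis supplies precisely this attainment, and once the extremum is realized the remainder is a one-line arithmetic check. No further analytic machinery (cut-off sequences, Stone-type constructions, form extensions) appears to be required.
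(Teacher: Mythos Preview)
Your argument is correct and follows the same overall strategy as the paper: reduce essential self-adjointness to showing $\ker(\Lkmax+I)=\{0\}$, then use that an $\ell^{2}$-function attains its extremum together with a maximum-principle computation at that point.

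The execution differs in two respects. First, the paper passes to the modulus $|f|$ and derives the inequality $\cL_{k}|f|\le 0$ from the eigenvalue equation, whereas you split into real and imaginary parts, each of which satisfies the eigenvalue equation exactly. Second, and more noticeably, the paper applies Lemma~\ref{le:maxprincple} in full: it restricts to an infinite $k$-connected component $V_{0}$, concludes from the lemma that $|f|$ is constant on $V_{0}$, and then invokes $|f|\in\ell^{2}(V_{0})$ with $V_{0}$ infinite to force $f=0$. Your one-point argument short-circuits this: the eigenvalue equation gives $(\cL_{k}g)(v_{0})=-M<0$ directly, which already contradicts $(\cL_{k}g)(v_{0})\ge0$ at the maximum, so neither the component decomposition nor the constancy conclusion of Lemma~\ref{le:maxprincple} is actually needed. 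Your route is therefore slightly more economical; the paper's route, on the other hand, is structured so that Lemma~\ref{le:maxprincple} stands as an independent maximum principle that could be reused elsewhere.
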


\begin{proof}
Since $\Lkmin$ is non-negative and $\Lkmin^{*}=\Lkmax$, it is sufficient
to show that $-1$ is not an eigenvalue of $\Lkmax$. Assume that
this is not the case. Then there exists an $f\in\ell^{2}(V)$ such
that $f\not\equiv0$ and $\Lkmax f=-f$. The function $f$ must be
zero on every finite $k$-hopping component since $\Lkmax$ restricted
to such a component is self-adjoint and non-negative. Therefore there
exists an infinite $k$-hopping component $V_{0}$ where $f$ is not
identically zero. It follows that
\[
  \delta_{k}(v)f(v)-\sum_{w:\,d(v,w)=k}f(w)=-f(v)
\]
for $v\in V_{0}$, or equivalently,
\[
  \bigl(\delta_{k}(v)+1\bigr)f(v)=\sum_{w:\,d(v,w)=k}f(w).
\]
Taking the modulus on both sides we obtain
\[
  \bigl(\delta_{k}(v)+1\bigr)|f(v)|\le\sum_{w:\,d(v,w)=k}|f(w)|.
\]
Now we consider the function $|f|$:
\[
  \bigl(\cL_{k}|f|\bigr)(v)=\delta_{k}(v)|f(v)|-\sum_{w:\,d(v,w)=k}|f(w)|\le-|f(v)|\le0.
\]
Since $f|_{V_{0}}\in\ell^{2}(V_{0})$, the function $|f|$ attains
the supremum on $V_{0}$. Hence Lemma~\ref{le:maxprincple} yields
that $|f|$ is constant on $V_{0}$. This implies that $f=0$ on $V_{0}$
because $V_{0}$ is infinite; a contradiction.
\end{proof}

We denote the closure of $\Lkmin$ by $L_{k}$ and call
it the \emph{$k$-path Laplacian}. By the previous theorem we have
$L_{k}=\Lkmax$; it is a self-adjoint and non-negative operator in
$\ell^{2}(V)$.
Note the difference in notation between the mapping $\cL_k$ acting in $C(V)$
and the self-adjoint operator $L_k$ in $\ell^2(V)$.

We can now estimate forms: for $f\in\dom(\Lkmin)=C_{0}(V)$ we obtain
from \eqref{formf} that
\begin{align}
  \bigl\langle\Lkmin f,f\bigr\rangle
  &= \frac{1}{2}\sum_{\substack{v,w\in V: \\[0.3ex] d(v,w)=k}}
  \bigl|f(v)-f(w)\bigr|^{2}
  \le \frac{1}{2}\sum_{\substack{v,w\in V: \\[0.3ex] d(v,w)=k}}
  \Bigl(|f(v)|+|f(w)|\Bigr)^{2}
  \notag\\[1ex]
  &\le \sum_{\substack{v,w\in V: \\[0.3ex] d(v,w)=k}}
  \Bigl(|f(v)|^{2}+|f(w)|^{2}\Bigr)
  \notag\\[1ex]
  &= \sum_{v\in V}\delta_{k}(v)|f(v)|^{2} + \sum_{w\in V}\delta_{k}(w)|f(w)|^{2}
  \notag\\[1ex]
  &= 2\sum_{v\in V}\delta_{k}(v)|f(v)|^{2}.
  \label{formfest}
\end{align}

In the next theorem we answer the question when $L_k$ is a bounded operator.

\begin{theorem}\label{th:Lkbounded}
The operator $L_{k}$ is bounded if and only if\,
$\delta_{k}$ is a bounded function on $V$.
Now assume that $\delta_{k}$ is bounded and set
\begin{equation}
  \deltakmax \defeq \max\{\delta_{k}(v):v\in V\};
  \label{defdeltakmax}
\end{equation}
then
\begin{equation}
\deltakmax\le\|L_{k}\|\le2\deltakmax.\label{boundnorm}
\end{equation}
\end{theorem}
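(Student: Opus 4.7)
The plan is to prove both inequalities in \eqref{boundnorm} directly; the lower bound will simultaneously show that $L_k$ must be unbounded whenever $\delta_k$ is unbounded, so the biconditional follows at once.

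For the lower bound I would evaluate $L_k$ on the basis vectors $e_v$. From \eqref{normev} one has $\|L_k e_v\| = \sqrt{\delta_k(v)^2+\delta_k(v)} \ge \delta_k(v)$ for every $v\in V$, and since $\|e_v\|=1$ this immediately gives $\|L_k\|\ge\sup_{v\in V}\delta_k(v)$. Hence $L_k$ is unbounded whenever $\delta_k$ is unbounded, and when $\delta_k$ is bounded we obtain the left inequality $\|L_k\|\ge\deltakmax$.

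For the upper bound, assuming $\delta_k$ is bounded, the form estimate \eqref{formfest} on $\dom(\Lkmin)=C_0(V)$ gives
\begin{equation*}
  \langle \Lkmin f,f\rangle \;\le\; 2\sum_{v\in V}\delta_k(v)|f(v)|^2 \;\le\; 2\deltakmax\|f\|^2
\end{equation*}
for every $f\in C_0(V)$. Since $\Lkmin$ is symmetric and non-negative, I would apply the Cauchy--Schwarz inequality to the non-negative sesquilinear form $(f,g)\mapsto\langle \Lkmin f,g\rangle$ to upgrade this to $|\langle \Lkmin f,g\rangle|\le 2\deltakmax\|f\|\|g\|$, and hence $\|\Lkmin f\|\le 2\deltakmax\|f\|$ on $C_0(V)$. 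Taking the closure $L_k=\overline{\Lkmin}$ (which, by the previous theorem, equals $\Lkmax$) then yields $\|L_k\|\le 2\deltakmax$ and in particular the boundedness of $L_k$.

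I do not expect any genuine obstacle; the only point requiring a little care is the passage from the quadratic form bound on the core $C_0(V)$ to an operator norm bound on $L_k$, which is exactly what the Cauchy--Schwarz step for non-negative symmetric operators delivers.
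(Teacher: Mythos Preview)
Your proposal is correct and follows essentially the same route as the paper: the lower bound and the unboundedness implication come from \eqref{normev} applied to the unit vectors $e_v$, and the upper bound is obtained from the form estimate \eqref{formfest} on the core $C_0(V)$. The only cosmetic difference is that the paper passes from the quadratic-form bound to the norm bound by invoking directly that $L_k$ is self-adjoint (so its norm equals its numerical radius on a core), whereas you spell this out via the Cauchy--Schwarz inequality for the non-negative form; both are standard and equivalent.
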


\begin{proof}
If $\delta_{k}$ is unbounded, then \eqref{normev} immediately shows
that $L_{k}$ is unbounded. Now assume that $\delta_{k}$ is bounded.
Relation \eqref{normev} yields the lower bound for $\|L_{k}\|$ in \eqref{boundnorm}.
From \eqref{formfest} we obtain that for $f\in\dom(\Lkmin)$,
\[
  \bigl\langle\Lkmin f,f\bigr\rangle\le2\deltakmax\sum_{v\in V}|f(v)|^{2}=2\deltakmax\|f\|^{2}.
\]
Since $L_{k}$ is self-adjoint and $L_{k}$ is the closure of $\Lkmin$,
this shows that $L_{k}$ is bounded and that $\|L_{k}\|\le2\deltakmax$.
\end{proof}

\section{Transformed $k$-path Laplacian operators}

\noindent
We consider series of the form
\begin{equation}
  \sum_{k=1}^{\infty}c_{k}L_{k}
  \label{seriesgen}
\end{equation}
with $c_{k}\in\CC$.  If all $L_{k}$ are bounded and
\begin{equation}\label{convcond}
  \sum_{k=1}^{\infty}|c_{k}|\,\|L_{k}\|<\infty,
\end{equation}
then the series in \eqref{seriesgen} converges to a bounded operator on $\ell^2(V)$.
If, in addition, $c_k\in\RR$ for all $k\in\NN$, then the operator in \eqref{seriesgen}
is self-adjoint; if $c_k\ge0$ for all $k\in\NN$, then it is a non-negative operator.

In the following we discuss three transformed operators in more detail:
the Laplace, the factorial and the Mellin transforms.

\begin{theorem}
Assume that $\delta_1$ is bounded on $V$ and let $\deltaonemax$ be
as in \eqref{defdeltakmax}.
\begin{myenum}
\item
The \emph{Laplace-transformed $k$-Laplacian}
\begin{equation}\label{deftLL}
  \tLL{\lambda} \defeq \sum_{k=1}^\infty e^{-\lambda k}L_k
\end{equation}
is a bounded operator when $\lambda\in\CC$ with $\Re\lambda>\ln\deltaonemax$.
It is non-negative if $\lambda\in(\ln\deltaonemax,\infty)$.
\item
The \emph{factorial-transformed $k$-Laplacian}
\begin{equation}\label{deftLF}
  \tLF{z} \defeq \sum_{k=1}^\infty \frac{z^k}{k!}L_k
\end{equation}
is a bounded operator for every $z\in\CC$.
It is self-adjoint if $z\in\RR$ and non-negative if $z\ge0$.
\item
Assume that $\deltakmax$ satisfies
\begin{equation}\label{deltapolyn}
  \deltakmax \le Ck^\alpha
\end{equation}
for some $\alpha\ge0$ and $C>0$; then the \emph{Mellin-transformed $k$-Laplacian}
\begin{equation}\label{deftLM}
  \tLM{s} \defeq \sum_{k=1}^\infty \frac{1}{k^s}L_k
\end{equation}
is a bounded operator for $s\in\CC$ with $\Re s>\alpha+1$.

Under the assumption \eqref{deltapolyn} the operator $\tLL{\lambda}$
from \eqref{deftLL} is bounded for every $\lambda\in\CC$ with $\Re\lambda>0$.
\end{myenum}
\end{theorem}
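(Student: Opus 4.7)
The plan is to verify, in each of the three parts, the absolute convergence condition \eqref{convcond}; once this is established, boundedness of the resulting series, together with self-adjointness (when all coefficients are real) and non-negativity (when all coefficients are non-negative), follows immediately from the general remarks after \eqref{seriesgen}. Throughout, the operator-norm bound $\|L_k\|\le 2\deltakmax$ supplied by Theorem~\ref{th:Lkbounded} will play the role of the key quantitative input.

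The main obstacle, and really the only step that is not mechanical, is to convert the hypothesis $\delta_1\le\deltaonemax$ (which carries no direct information about $\delta_k$ for $k>1$) into an exponential bound on $\delta_k$. The argument is a standard walk-counting one: every vertex $w$ with $d(v,w)=k$ is the endpoint of some walk of length $k$ starting at $v$, and the number of such walks is at most $\deltaonemax^{k}$. Hence $\delta_k(v)\le\deltaonemax^{k}$ for every $v\in V$, and Theorem~\ref{th:Lkbounded} yields $\|L_k\|\le 2\deltaonemax^{k}$.

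With this estimate, part (i) reduces to a geometric series,
\[
  \sum_{k=1}^{\infty}\bigl|e^{-\lambda k}\bigr|\,\|L_k\|
  \le 2\sum_{k=1}^{\infty}\bigl(\deltaonemax\,e^{-\Re\lambda}\bigr)^{k},
\]
which is finite precisely when $\Re\lambda>\ln\deltaonemax$; the coefficients $e^{-\lambda k}$ are non-negative for $\lambda\in(\ln\deltaonemax,\infty)$, giving the non-negativity assertion. For part (ii) the analogous computation yields
\[
  \sum_{k=1}^{\infty}\frac{|z|^{k}}{k!}\,\|L_k\|
  \le 2\bigl(e^{\deltaonemax|z|}-1\bigr)<\infty
\]
for every $z\in\CC$, and reality (respectively, non-negativity) of the coefficients $z^{k}/k!$ is automatic for $z\in\RR$ (respectively, $z\ge 0$).

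For part (iii) the walk-counting bound is replaced by the polynomial hypothesis $\deltakmax\le Ck^{\alpha}$, giving $\|L_k\|\le 2Ck^{\alpha}$ via Theorem~\ref{th:Lkbounded}. Then
$\sum_{k=1}^{\infty}k^{-s}\|L_k\|\le 2C\sum_{k=1}^{\infty}k^{\alpha-\Re s}$
converges exactly when $\Re s>\alpha+1$, and the same bound gives
$\sum_{k=1}^{\infty}e^{-\Re\lambda k}\|L_k\|\le 2C\sum_{k=1}^{\infty}k^{\alpha}e^{-\Re\lambda k}<\infty$
for every $\lambda$ with $\Re\lambda>0$, which is the concluding assertion of the theorem.
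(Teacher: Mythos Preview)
Your proof is correct and follows essentially the same route as the paper: establish $\delta_k(v)\le\deltaonemax^{k}$ (the paper merely asserts this ``follows easily'' while you give the walk-counting justification), invoke Theorem~\ref{th:Lkbounded} to obtain $\|L_k\|\le 2\deltaonemax^{k}$ (respectively $\|L_k\|\le 2Ck^{\alpha}$ in~(iii)), and then verify the convergence condition~\eqref{convcond} in each case. The self-adjointness and non-negativity conclusions are drawn from the same general remarks after~\eqref{seriesgen} that you cite.
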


\begin{proof}
It follows easily that $\deltakmax \le \deltaonemax^k$ and hence
\[
  \|L_k\| \le 2\deltaonemax^k
\]
for every $k\in\NN$ by Theorem~\ref{th:Lkbounded}.
Therefore the convergence condition \eqref{convcond} is satisfied in items (i) and (ii)
for the specified $\lambda$ and $z$.

For item (iii) we observe that under the condition \eqref{deltapolyn}
the operators $L_k$ satisfy
\[
  \|L_k\| \le 2Ck^\alpha.
\]
Hence also in this case the condition \eqref{convcond} is satisfied for $\tLM{s}$
with $\Re s>\alpha+1$ and for $\tLL{\lambda}$ with $\Re\lambda>0$.
\end{proof}

If the graph is finite, then there is no restriction on the parameters needed,
i.e.\ one can choose any $\lambda\in\CC$ in (i) and any $s\in\CC$ in (iii).

The growth condition \eqref{deltapolyn} is fulfilled for several infinite graphs
such as a linear path graph (or chain) for which $\deltakmax=2$ for every $k\in\NN$,
an infinite ladder for which $\deltakmax=4$,
and for triangular, square and hexagonal lattices
for which $\deltakmax=gk$, with $g=6,4,3$, respectively, among
many others.  However, it is not fulfilled for Cayley trees for which
$\delta_{k,\max}=r\left(r-1\right)^{k-1}$ where $r$ is the degree
of the non-pendant nodes.

Let us now consider the situation when the operators $L_k$ may be unbounded.
The closed quadratic form $\frl_k$ corresponding to $L_k$
in the sense of \cite[\S VI.1.5]{kato} is given by
\[
  \frl_k[f] \defeq \frac{1}{2}\sum_{\substack{v,w\in V: \\[0.3ex] d(v,w)=k}}
  \bigl|f(v)-f(w)\bigr|^2
\]
with domain
\[
  \dom(\frl_k) = \biggl\{f\in\ell^2(V):
  \sum_{\substack{v,w\in V: \\[0.3ex] d(v,w)=k}}\bigl|f(v)-f(w)\bigr|^2 < \infty\biggr\}.
\]
Assume that $c_k\ge0$, $k\in\NN$.  Then
\begin{equation}\label{sum_of_forms}
  \sum_{k=1}^N c_k\frl_k
\end{equation}
is an increasing sequence of densely defined, closed, non-negative quadratic forms
(see \cite[Theorem~VI.1.31]{kato}).
By \cite[Theorem~VIII.3.13a]{kato} the sequence in \eqref{sum_of_forms}
converges to a closed non-negative quadratic form $\tilde\frl$ that is given by
\begin{align*}
  & \tilde\frl[f] = \sum_{k=1}^\infty c_k\frl_k[f]
  = \frac{1}{2}\sum_{k=1}^\infty c_k \hspace*{-1ex}
  \sum_{\substack{v,w\in V: \\[0.3ex] d(v,w)=k}}\bigl|f(v)-f(w)\bigr|^2, \\[1ex]
  & \dom(\tilde\frl) = \Bigl\{f\in\bigcap_{k=1}^\infty\dom(\frl_k):
  \sum_{k=1}^\infty c_k\frl_k[f]<\infty\Bigr\}.
\end{align*}
Assume now that
\begin{equation}\label{ckdkv}
  \sum_{k=1}^\infty c_k\delta_k(v) < \infty
\end{equation}
for every $v\in V$.  Since
\[
  \frl_k[e_v] = \langle L_k e_v,e_v\rangle = \delta_k(v)
\]
by \eqref{Lkev}, condition \eqref{ckdkv} implies that $e_v\in\dom(\tilde\frl)$
for every $v\in V$, and hence the form $\tilde\frl$
is densely defined.
By \cite[Theorem~VI.2.1]{kato} there exists a self-adjoint non-negative
operator $\tilde L$ that corresponds to $\tilde\frl$ in the sense that
\[
  \tilde\frl[f,g] = \bigl\langle\tilde Lf,g\bigr\rangle
  \qquad \text{for}\;\; f\in\dom\bigl(\tilde L\bigr),\; g\in\dom\bigl(\tilde\frl\bigr).
\]
Moreover, \cite[Theorem~VIII.3.13a]{kato} implies that the partial
sums $\sum_{k=1}^N c_k L_k$ converge in the strong resolvent sense to
the operator $\tilde L$.

As an example consider a tree where each vertex in generation $n\in\NN_0$
has $n+1$ children.  It is easy to see that there are $n!$ vertices
in generation $n$ and that
\[
  \delta_k(v) \le (n+k)!
\]
for each vertex $v$ in generation $n$.
For $z\in(0,1)$ condition \eqref{ckdkv} is satisfied for the factorial transform
since
\[
  \sum_{k=1}^\infty \frac{z^k}{k!}\delta_k(v)
  \le \sum_{k=1}^\infty \frac{z^k}{k!}(n+k)! < \infty
\]
for every vertex $v$ in generation $n$.
Hence $\tLF{z}$ is a self-adjoint operator on this tree.
If one includes linear chains of growing length between each generation,
then $\delta_k(v)$ is growing more slowly and also other transformed $k$-path
Laplacians are self-adjoint operators.

Assume that we are in the situation as above, i.e.\ that $c_k\ge0$ and
that condition \eqref{ckdkv} is satisfied.
It is not difficult to see that the quadratic form $\tilde\frl$ is a Dirichlet form,
i.e.\ it is closed and non-negative and it satisfies $\tilde\frl[Cf]\le\tilde\frl[f]$
for every mapping $C:\CC\to\CC$ with $C(0)=0$ and $|Cx-Cy|\le|x-y|$.
By the Beurling--Deny criteria the operator $-\tilde L$ generates
an analytic, positivity-preserving semigroup of contractions;
see, e.g.\ \cite[Appendix~1 to Section~XIII.12]{reed_simon_IV}.
In the remaining sections we consider a situation where all $L_k$ are bounded operators
and \eqref{convcond} is satisfied.  In this case we can write $(e^{-t\tilde L})_{t\ge0}$
for the semigroup.

\section{The $k$-path Laplacians on the infinite path graph}

\noindent
Let $P_{\infty}$ be the infinite path graph (or chain),
i.e.\ the graph whose vertices can be identified with $\ZZ$ and each pair
of consecutive numbers is connected by a single edge.
We now use index notation and write $u=(u_n)_{n\in\ZZ}$ for elements
in $\ell^2(P_\infty)$.
The $k$-path Laplacian acts as follows
\[
  (L_k u)_n = 2u_n - u_{n+k} - u_{n-k},
  \qquad n\in\ZZ,\; u=(u_\mu)_{\mu\in\ZZ}\in\ell^2(P_\infty).
\]
It can also be identified with a double-infinite matrix
whose entries are
\begin{align}\label{eq3.1}
  (L_{k})_{\mu\nu} = 2\delta_{\mu,\nu}-\delta_{\mu,\nu-k}-\delta_{\mu,\nu+k},
  \qquad \mu,\nu\in\ZZ,
\end{align}
where $\delta$ denotes the Kronecker delta.

In order to consider the diffusion of particles on the graph, we let $e_0$
be as in \eqref{defev}, i.e.\
\begin{equation}
  (e_0)_{n} = \delta_{n,0},
  \label{eq:initial condition}
\end{equation}
which describes a profile that is concentrated at the origin.
Under the application of the standard combinatorial Laplacian $L_{1}$
the particle hops to the neighbouring sites $\pm1$, whereas under the application of
the $k$-path Laplacian $L_{k}$ the particle hops to the sites $\pm k$:
\[
  (L_k e_0)_n = 2\delta_{n,0}-\delta_{n,-k}-\delta_{n,+k}.
\]

Since $\deltakmax=2$ for every $k\in\NN$, the transformed $k$-Laplacians $\tLL{\lambda}$,
$\tLF{z}$ and $\tLM{s}$ from \eqref{deftLL}, \eqref{deftLF} and \eqref{deftLM},
respectively, are bounded operators
for $\lambda\in\CC$ with $\Re\lambda>0$, for every $z\in\CC$ and
every $s\in\CC$ with $\Re s>1$.
These operators are self-adjoint and non-negative if $\lambda\in(0,\infty)$,
$z\in(0,\infty)$ and $s\in(1,\infty)$, respectively.
In the following lemma we find explicit representations of these operators.

\begin{lemma}\label{lem3}
Let $\lambda\in\CC$ with $\Re\lambda>0$, $z\in\CC$ and $s\in\CC$ with $\Re s>1$, and
let $\tLL{\lambda}$, $\tLF{z}$, $\tLM{s}$ be as in \eqref{deftLL}, \eqref{deftLF}
and \eqref{deftLM}, respectively.  Then for any $u\in\ell^2(P_\infty)$ we have
\begin{align*}
  \bigl(\tLL{\lambda}u\bigr)_n &= \frac{2}{e^\lambda-1}u_n
  -\sum_{k=1}^\infty e^{-\lambda k}\bigl(u_{n-k}+u_{n+k}\bigr),
  \\[1ex]
  \bigl(\tLF{z}u\bigr)_n &= 2(e^z-1)u_n
  - \sum_{k=1}^\infty \frac{z^k}{k!}\bigl(u_{n-k}+u_{n+k}\bigr),
  \\[1ex]
  \bigl(\tLM{s}u\bigr)_n &= 2\zeta(s)u_n
  - \sum_{k=1}^\infty \frac{1}{k^s}\bigl(u_{n-k}+u_{n+k}\bigr),
\end{align*}
where $\zeta$ is Riemann's zeta function defined by
\[
  \zeta(s) = \sum_{k=1}^\infty \frac{1}{k^s}\,.
\]
Applying them to $e_0$ we obtain
\begin{gather}
  \bigl(\tLL{\lambda}e_0\bigr)_n = \begin{cases}
    \dfrac{2}{e^\lambda-1} & \text{if}\;\;n=0, \\[2ex]
    e^{-\lambda|n|} & \text{if}\;\;n\ne0,
  \end{cases}
  \hspace*{5ex}
  \bigl(\tLF{z}e_0\bigr)_n = \begin{cases}
    2(e^z-1) & \text{if}\;\;n=0, \\[2ex]
    \dfrac{z^{|n|}}{|n|!} & \text{if}\;\;n\ne0,
  \end{cases}
  \label{tLLFe0}
  \\[1ex]
  \bigl(\tLM{s}e_0\bigr)_n = \begin{cases}
    2\zeta(s) & \text{if}\;\;n=0, \\[2ex]
    \dfrac{1}{|n|^s} & \text{if}\;\;n\ne0.
  \end{cases}
  \label{tLMe0}
\end{gather}
\end{lemma}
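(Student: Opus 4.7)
The plan is to derive the three general formulas by computing $(L_k u)_n$ explicitly and then summing over $k$ against the respective coefficients, and to obtain the $e_0$ specializations by observing that the translated shifts of a delta supported at the origin collapse the series to a single term.

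First I would fix $u\in\ell^2(P_\infty)$ and use the identity $(L_k u)_n = 2u_n - u_{n+k} - u_{n-k}$. Since the previous theorem guarantees that $\tLL{\lambda}$, $\tLF{z}$ and $\tLM{s}$ are bounded operators (because $\deltakmax=2$, hence $\|L_k\|\le 4$), the defining series converge in operator norm; evaluating the $n$-th coordinate of the limit therefore equals the limit of the $n$-th coordinates. For the Laplace case this gives
\[
  \bigl(\tLL{\lambda}u\bigr)_n
  = \sum_{k=1}^\infty e^{-\lambda k}\bigl(2u_n - u_{n+k} - u_{n-k}\bigr).
\]
Since $|u_{n\pm k}|\le\|u\|$, the three subseries converge absolutely and may be split; the coefficient of $u_n$ is $2\sum_{k=1}^\infty e^{-\lambda k}=\frac{2}{e^\lambda-1}$, yielding the first formula. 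The factorial and Mellin cases are handled identically, with the prefactor of $u_n$ summing to $2(e^z-1)$ and $2\zeta(s)$, respectively.

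For the action on $e_0$ I would substitute $(e_0)_m=\delta_{m,0}$, giving $(e_0)_{n-k}+(e_0)_{n+k}=\delta_{n,k}+\delta_{n,-k}$. If $n=0$ this vanishes for all $k\ge 1$, so only the prefactor term survives and the three expressions reduce to $\frac{2}{e^\lambda-1}$, $2(e^z-1)$ and $2\zeta(s)$, respectively. If $n\neq 0$ the coefficient $(e_0)_n$ in the prefactor term is zero, and among the terms $k\ge 1$ exactly $k=|n|$ contributes a $1$ to the shifted sum, selecting the single summand $e^{-\lambda|n|}$, $z^{|n|}/|n|!$ or $|n|^{-s}$ from the respective series.

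There is really no serious obstacle here; the only point requiring a moment of care is the interchange of the outer operator-norm sum over $k$ with coordinate evaluation and the splitting of the single sum into three subseries, both of which are immediate consequences of boundedness of $u$ and absolute convergence of the coefficient series $\sum e^{-\lambda k}$, $\sum z^k/k!$ and $\sum k^{-s}$ in the respective parameter ranges. All the nontrivial content has already been established in the preceding theorem; Lemma~\ref{lem3} is an explicit unpacking of those series on $P_\infty$.
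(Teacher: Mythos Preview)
Your proposal is correct and follows essentially the same approach as the paper: the paper's proof simply writes the general identity $\bigl(\sum_{k=1}^\infty c_k L_k u\bigr)_n = \bigl(2\sum_{k=1}^\infty c_k\bigr)u_n - \sum_{k=1}^\infty c_k(u_{n-k}+u_{n+k})$ for arbitrary coefficients satisfying the convergence condition and then specializes, whereas you do the three cases separately, but the underlying computation and the justification via operator-norm convergence are identical. The only cosmetic difference is that the paper handles all three transforms at once with a generic $c_k$, which saves a little repetition.
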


\begin{proof}
Let $c_k$, $k\in\NN$, be arbitrary coefficients so that \eqref{convcond} is satisfied.
Then
\[
  \biggl(\sum_{k=1}^\infty c_k L_k u\biggr)_n
  = \biggl(2\sum_{k=1}^\infty c_k\biggr)u_n - \sum_{k=1}^\infty c_k(u_{n-k}+u_{n+k}).
\]
Now the assertions of the lemma follow easily.
\end{proof}

Figure \ref{particle hopping} illustrates the results of Lemma
\ref{lem3} in a graphical form displaying $\tLL{1}e_0$, $\tLF{1}e_0$
and $\tLM{2.5}e_0$ on $21$ nodes.
The plot clearly indicates that the three transforms of the $k$-path
Laplacian operators hop the particles to distant sites in the linear
chain.
\begin{figure}
\centering
\includegraphics[width=0.6\textwidth]{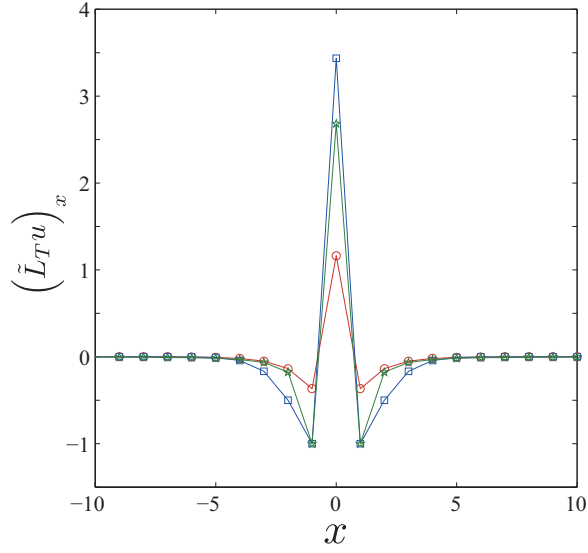}
\caption{Plot of the particle density at the different nodes of a linear path
with 21 nodes obtained from the Laplace (circles), factorial (squares)
and Mellin (stars) transformed $k$-path Laplacians with $\lambda=1$,
$z=1$ and $s=2.5$, respectively.
}
\label{particle hopping}
\end{figure}

\section{Time-evolution operators}

\noindent
Let us now consider the time evolution of the particle density profile
governed by the differential equation
\[
  \frac{\rd}{\rd t}u(t) = -Lu(t)
\]
satisfying the initial equation $u(0)=w$, where $L$ is any of the operators
$L_k$, $\tLL{\lambda}$, $\tLF{z}$ or $\tLM{s}$, where $\lambda\in\CC$ with $\Re\lambda>0$,
$z\in\CC$, $s\in\CC$ with $\Re s>1$ and where $w\in\ell^2(P_\infty)$.
Since $L$ is a bounded operator in all cases, the solution is given by
\begin{equation}\label{time_evolutionL}
  u(t) = e^{-tL}w, \qquad t\ge0.
\end{equation}

To find this exponential operator $e^{-tL}$, we interpret sequences in $\ell^2(P_\infty)$
as Fourier coefficients and transform the problem into a problem in $L^2(-\pi,\pi)$.
Define the unitary operator $\cF:\ell^2(P_\infty)\to L^2(-\pi,\pi)$ by
\[
  (\cF u)(q) = \frac{1}{\sqrt{2\pi}\,}\sum_{n\in\ZZ}u_n e^{inq},
  \qquad u=(u_n)_{n\in\ZZ}\in\ell^2(P_\infty);
\]
its inverse is given by
\[
  (\cF^{-1}g)_n = \frac{1}{\sqrt{2\pi}\,}\int_{-\pi}^\pi e^{-inq}g(q)\rd q,
  \qquad g\in L^2(-\pi,\pi).
\]
Let us first determine the operator in $L^2(-\pi,\pi)$ that is equivalent to $L_k$
via $\cF$.  For $u\in\ell^2(P_\infty)$ we have
\begin{align*}
  \bigl(\cF L_k u\bigr)(q)
  &= \frac{1}{\sqrt{2\pi}\,}\sum_{n\in\ZZ}\bigl(2u_n-u_{n-k}-u_{n+k}\bigr)e^{inq}
  \\[1ex]
  &= \frac{1}{\sqrt{2\pi}\,}\sum_{n\in\ZZ}\bigl(2u_n e^{inq}
  - u_n e^{i(n+k)q} - u_n e^{i(n-k)q}\bigr)
  \\[1ex]
  &= \frac{1}{\sqrt{2\pi}\,}\sum_{n\in\ZZ}\bigl(2-e^{ikq}-e^{-ikq}\bigr)u_n e^{inq}
  = \bigl(2-e^{ikq}-e^{-ikq}\bigr)(\cF u)(q).
\end{align*}
Hence the operator $L_k$ is unitarily equivalent to the multiplication operator
in $L^2(-\pi,\pi)$ by the function
\begin{equation}\label{defellk}
  \ell_k(q) \defeq 2-e^{ikq}-e^{-ikq} = 2\bigl(1-\cos(kq)\bigr),
\end{equation}
i.e.\
\begin{equation}\label{FLkFinv}
  \bigl(\cF L_k\cF^{-1}g\bigr)(q) = \ell_k(q)g(q),
  \qquad g\in L^2(-\pi,\pi).
\end{equation}
The transformed operators $\tLL{\lambda}$, $\tLF{z}$ and $\tLM{s}$
are also unitarily equivalent to multiplication operators:
\begin{equation}\label{FLFinv}
  \bigl(\cF\tLT\cF^{-1}g\bigr)(q) = \tell_T(q)g(q)
\end{equation}
for $T={\rm L},\lambda$, $T={\rm F},z$ or $T={\rm M},s$ where
\begin{equation}\label{deftellLFM}
  \begin{gathered}
    \tellL{\lambda}(q) \defeq \sum_{k=1}^\infty e^{-\lambda k}\ell_k(q), \hspace*{8ex}
    \tellF{z}(q) \defeq \sum_{k=1}^\infty \frac{z^k}{k!}\ell_k(q), \\
    \tellM{s}(q) \defeq \sum_{k=1}^\infty \frac{1}{k^s}\ell_k(q).
  \end{gathered}
\end{equation}
Closed forms for these sums are given in the next lemma.

\begin{lemma}
Let $\lambda\in\CC$ with $\Re\lambda>0$, $z\in\CC$, $s\in\CC$ with $\Re s>1$.
With the notation from above we have
\begin{align}
  \tellL{\lambda}(q) &= \frac{(e^\lambda+1)(1-\cos q)}{(e^\lambda-1)(\cosh\lambda-\cos q)}\,,
  \label{reptellL}
  \\[2ex]
  \tellF{z}(q) &= 2\bigl[e^z-e^{z\cos q}\cos(z\sin q)\bigr],
  \label{reptellF}
  \\[2ex]
  \tellM{s}(q) &= 2\zeta(s)-\Li_s(e^{iq})-\Li_s(e^{-iq}),
  \label{reptellM}
\end{align}
where $\Li_s$ is the polylogarithm --- also known as Jonqui\`{e}re's function --- defined
for $s\in\CC$ with $\Re s>1$ by
\[
  \Li_{s}(z) \defeq \sum_{k=1}^\infty \frac{z^k}{k^s}
  \qquad \text{when}\;\; |z|\le1
\]
and by analytic continuation to $\CC\setminus(1,\infty)$.

Moreover, the functions $\tellL{\lambda}$, $\tellF{z}$ and $\tellM{s}$ are
continuous on $[-\pi,\pi]$ and satisfy
\begin{equation}\label{lpos}
  \ell(q) > 0 \qquad \text{for}\;\;q\in[-\pi,\pi]\setminus\{0\}
\end{equation}
for $\ell=\tellL{\lambda},\tellF{z},\tellM{s}$ when $\lambda>0$, $z>0$, $s>1$, respectively.
\end{lemma}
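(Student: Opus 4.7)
The plan is to derive the three closed-form expressions by direct summation, exploiting the identity $\ell_k(q)=2-e^{ikq}-e^{-ikq}$ from \eqref{defellk}, and then to read off continuity and positivity from the series representations \eqref{deftellLFM} themselves rather than from the closed forms.

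For the Laplace transform I would split the sum:
\[
  \tellL{\lambda}(q) = 2\sum_{k=1}^\infty e^{-\lambda k}
  - \sum_{k=1}^\infty e^{-k(\lambda-iq)} - \sum_{k=1}^\infty e^{-k(\lambda+iq)}
  = \frac{2}{e^\lambda-1} - \frac{1}{e^{\lambda-iq}-1} - \frac{1}{e^{\lambda+iq}-1},
\]
which is justified because $\Re\lambda>0$. Combining the last two fractions over the common denominator $e^{2\lambda}-2e^\lambda\cos q+1 = 2e^\lambda(\cosh\lambda-\cos q)$ and simplifying (using $e^{2\lambda}+1=2e^\lambda\cosh\lambda$) yields \eqref{reptellL}; this algebraic reduction is the only slightly tedious step. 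For the factorial transform the two ``side'' series immediately sum to $e^{ze^{iq}}$ and $e^{ze^{-iq}}$; writing $e^{ze^{\pm iq}} = e^{z\cos q}\bigl(\cos(z\sin q)\pm i\sin(z\sin q)\bigr)$ makes the imaginary parts cancel and delivers \eqref{reptellF}. For the Mellin transform \eqref{reptellM} is just the definition of $\Li_s$ applied termwise, valid for $\Re s>1$ where $\sum k^{-s}$ converges absolutely.

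For the continuity claim I would invoke the Weierstrass $M$-test. Each $\ell_k$ is continuous on $[-\pi,\pi]$ with $|\ell_k(q)|\le 4$, and the coefficient sequences $(e^{-\lambda k})_k$, $(z^k/k!)_k$ and $(k^{-s})_k$ are absolutely summable under the stated hypotheses, so the three defining series in \eqref{deftellLFM} converge uniformly on $[-\pi,\pi]$ and their sums are therefore continuous.

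For the positivity claim \eqref{lpos} I would avoid analysing the closed forms (which, in the Laplace case, requires knowing $\cosh\lambda>\cos q$ for $\lambda>0$, and in the factorial case requires estimating $e^{z\cos q}\cos(z\sin q)\le e^z$). Instead I would observe that $\ell_k(q)=2\bigl(1-\cos(kq)\bigr)\ge 0$ for all $k\in\NN$ and all $q$, while the $k=1$ term satisfies $\ell_1(q)=2(1-\cos q)>0$ for $q\in[-\pi,\pi]\setminus\{0\}$. Since the coefficients $e^{-\lambda}$, $z$ and $1$ for $k=1$ are strictly positive when $\lambda>0$, $z>0$, $s>1$ respectively, every term of the series in \eqref{deftellLFM} is non-negative and the $k=1$ term is strictly positive, forcing the sum to be strictly positive off $q=0$. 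The main obstacle, as already noted, is the bookkeeping in the Laplace computation; the factorial and Mellin cases, and both the continuity and positivity statements, are essentially immediate from the series forms.
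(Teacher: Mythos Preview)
Your proposal is correct and follows essentially the same approach as the paper: the Laplace identity is obtained by summing three geometric series and combining the fractions exactly as you outline, while the paper dismisses the factorial and Mellin cases as ``proved easily'' (your details for these are fine). For continuity the paper likewise appeals to uniform convergence of the series, and for positivity it uses precisely your argument that each $\ell_k\ge0$ with $\ell_1(q)>0$ for $q\ne0$ and all coefficients positive.
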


\begin{proof}
Representation \eqref{reptellL} follows from
\begin{align*}
  \tellL{\lambda}(q)
  &= 2\sum_{k=1}^\infty e^{-\lambda k} - \sum_{k=1}^\infty e^{-\lambda k}e^{ikq}
  - \sum_{k=1}^\infty e^{-\lambda k}e^{-ikq}
  \\[1ex]
  &= \frac{2}{e^\lambda-1}-\frac{1}{e^{\lambda-iq}-1}-\frac{1}{e^{\lambda+iq}-1}
  = \frac{2}{e^\lambda-1}-2\frac{e^\lambda\cos q-1}{\bigl|e^{\lambda-iq}-1\bigr|^2}
  \\[1ex]
  &= \frac{2}{e^\lambda-1}-2\frac{e^\lambda\cos q-1}{e^{2\lambda}+1-2e^\lambda\cos q}
  = \frac{2}{e^\lambda-1} - \frac{\cos q-e^{-\lambda}}{\cosh\lambda-\cos q}
  \\[1ex]
  &= \frac{(e^\lambda+1)(1-\cos q)}{(e^\lambda-1)(\cosh\lambda-\cos q)}\,.
\end{align*}
The representations \eqref{reptellF} and \eqref{reptellM} are proved easily.
The continuity of the functions follows from the representations
\eqref{reptellL}--\eqref{reptellM} or from the uniform convergence of the series.

To show \eqref{lpos}, observe that $\ell_k(q)=2(1-\cos(kq))\ge0$ for all $q\in[-\pi,\pi]$.
Moreover, $\ell_1(q)>0$ for $q\in[-\pi,\pi]\setminus\{0\}$.  Since all coefficients
in the series in \eqref{deftellLFM} are positive when $\lambda>0$, $z>0$, $s>0$,
respectively, the claim follows.
\end{proof}

The following theorem gives an explicit description of the time evolution operator
corresponding to the transformed $k$-path Laplacians;
cf., e.g.\ \cite[Proposition~2]{CGRTV} for a similar representation for the case $L=L_1$.

\begin{theorem}\label{th:time_evol}
Let $\lambda\in\CC$ with $\Re\lambda>0$, $z\in\CC$ and $s\in\CC$ with $\Re s>1$,
let $L=L_k$, $\tLL{\lambda}$, $\tLF{z}$ or $\tLM{s}$
and let $\ell=\ell_k$, $\tellL{\lambda}$, $\tellF{z}$ or $\tellM{s}$, correspondingly.
For $w=(w_\nu)_{\nu\in\ZZ}\in\ell^2(P_\infty)$ the solution
of \eqref{time_evolutionL} is given by
\begin{equation}\label{evol_repr}
  \bigl(u(t)\bigr)_n = \bigl(e^{-tL}w\bigr)_n
  = \sum_{\nu\in\ZZ}w_\nu \frac{1}{2\pi}\int_{-\pi}^\pi e^{i(n-\nu)q}e^{-t\ell(q)}\rd q,
  \qquad t\ge0,\; n\in\ZZ.
\end{equation}
The entries of the double-infinite Toeplitz matrix corresponding to the
time evolution operator $e^{-tL}$ are
\begin{equation}\label{evol_repr2}
  \bigl(e^{-tL}\bigr)_{\mu\nu}
  = \frac{1}{2\pi}\int_{-\pi}^\pi e^{i(\mu-\nu)q}e^{-t\ell(q)}\rd q
  = \frac{1}{2\pi}\int_{-\pi}^\pi \cos\bigl((\mu-\nu)q\bigr)e^{-t\ell(q)}\rd q.
\end{equation}
\end{theorem}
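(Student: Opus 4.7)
The plan is to transport the Cauchy problem from $\ell^2(P_\infty)$ to $L^2(-\pi,\pi)$ via the Fourier transform $\cF$, where by \eqref{FLkFinv} and \eqref{FLFinv} the operator $L$ acts as multiplication by the bounded continuous function $\ell$. Since $L$ is bounded, $e^{-tL}$ is defined by a norm-convergent exponential series, and unitary conjugation $\cF(\mydot)\cF^{-1}$ sends each power $L^n$ to multiplication by $\ell^n$. Because $\ell$ is bounded on $[-\pi,\pi]$, the exponential series of multiplication operators converges in operator norm to multiplication by $e^{-t\ell}$; hence $\cF e^{-tL}\cF^{-1}$ is the multiplication operator by $e^{-t\ell(q)}$.

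I would then unwind the definitions. For $w\in\ell^2(P_\infty)$, the image $\cF w$ is the Fourier series $(2\pi)^{-1/2}\sum_\nu w_\nu e^{i\nu q}$, convergent in $L^2(-\pi,\pi)$; multiplication by the bounded function $e^{-t\ell(q)}$ preserves $L^2$-convergence, and applying $\cF^{-1}$ yields
\[
  \bigl(e^{-tL}w\bigr)_n = \frac{1}{2\pi}\int_{-\pi}^\pi e^{-inq}e^{-t\ell(q)}\sum_{\nu\in\ZZ}w_\nu e^{i\nu q}\rd q.
\]
For $w$ of finite support, interchanging the finite sum with the integral is immediate and gives \eqref{evol_repr}; the general case follows by density of $C_0(P_\infty)$ in $\ell^2(P_\infty)$ together with continuity of both sides in $w$ (the right-hand side being the composition of multiplication by an $L^\infty$-function, inverse Fourier transform, and coordinate evaluation, each of which is bounded on the relevant spaces).

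For the Toeplitz representation \eqref{evol_repr2}, I would specialise \eqref{evol_repr} to $w=e_\nu$, which leaves only the $\nu$-th summand and yields the first equality. To replace the complex exponential kernel by a cosine I would use that each $\ell_k(q)=2(1-\cos(kq))$ is even in $q$, so every transformed $\ell$ is also even, and thus so is $e^{-t\ell(q)}$; consequently the imaginary part of $e^{i(\mu-\nu)q}e^{-t\ell(q)}$ is odd in $q$ and its integral over $(-\pi,\pi)$ vanishes. No step is genuinely difficult; the only care required is in the functional-calculus identification $\cF e^{-tL}\cF^{-1}=M_{e^{-t\ell}}$, which is guaranteed by boundedness of $L$ and continuity of $\ell$ on the compact interval $[-\pi,\pi]$, and in the interchange of summation with integration, which is underwritten by the continuity of $\cF^{-1}$ on $L^2(-\pi,\pi)$.
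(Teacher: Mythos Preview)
Your proposal is correct and follows essentially the same route as the paper: conjugate by $\cF$ to turn $L$ into multiplication by the bounded function $\ell$, pass to the exponential to get multiplication by $e^{-t\ell}$, and read off Fourier coefficients, invoking the evenness of $\ell$ at the end. One small point: after interchanging sum and integral your integrand has $e^{-i(n-\nu)q}$ rather than $e^{i(n-\nu)q}$, so the evenness of $\ell$ (which you use for \eqref{evol_repr2}) is already needed to match \eqref{evol_repr} exactly; the paper handles this by first computing $e^{-tL}e_\nu$, flipping the sign via evenness, and then summing over $\nu$ using boundedness of $e^{-tL}$.
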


\begin{proof}
Since $\cF L\cF^{-1}$ acts as a multiplication operator by $\ell$
(see \eqref{FLkFinv} and \eqref{FLFinv}), we have
\[
  \big(\cF e^{-tL}\cF^{-1}g\bigr)(q) = e^{-t\ell(q)}g(q),
  \qquad g\in L^2(-\pi,\pi).
\]
Let $\nu\in\ZZ$ and $e_\nu$ as in \eqref{defev}.  Then
\begin{align*}
  \bigl(e^{-tL}e_\nu\bigr)_n
  &= \bigl(\cF^{-1}e^{-t\ell(\cdot)}\cF e_\nu\bigr)_n
  = \frac{1}{\sqrt{2\pi}\,}\int_{-\pi}^\pi e^{-inq}e^{-t\ell(q)}
  \frac{1}{\sqrt{2\pi}\,}e^{i\nu q}\rd q
  \\[1ex]
  &= \frac{1}{2\pi}\int_{-\pi}^\pi e^{-i(n-\nu)q}e^{-t\ell(q)}\rd q
  = \frac{1}{2\pi}\int_{-\pi}^\pi e^{i(n-\nu)q}e^{-t\ell(q)}\rd q
\end{align*}
where the last equality follows since $\ell$ is an even function.
Since $e^{-tL}$ is a bounded operator we have
\[
  e^{-tL}w = \sum_{\nu\in\ZZ} w_\nu e^{-tL}e_\nu,
\]
which proves \eqref{evol_repr} and hence also \eqref{evol_repr2}.
\end{proof}

In Figure \ref{densities vs x} we illustrate the time evolution of
the density $u(t)$ for the three transforms of the $k$-path Laplace operators.
\begin{figure}
\centering
\includegraphics[width=0.48\textwidth]{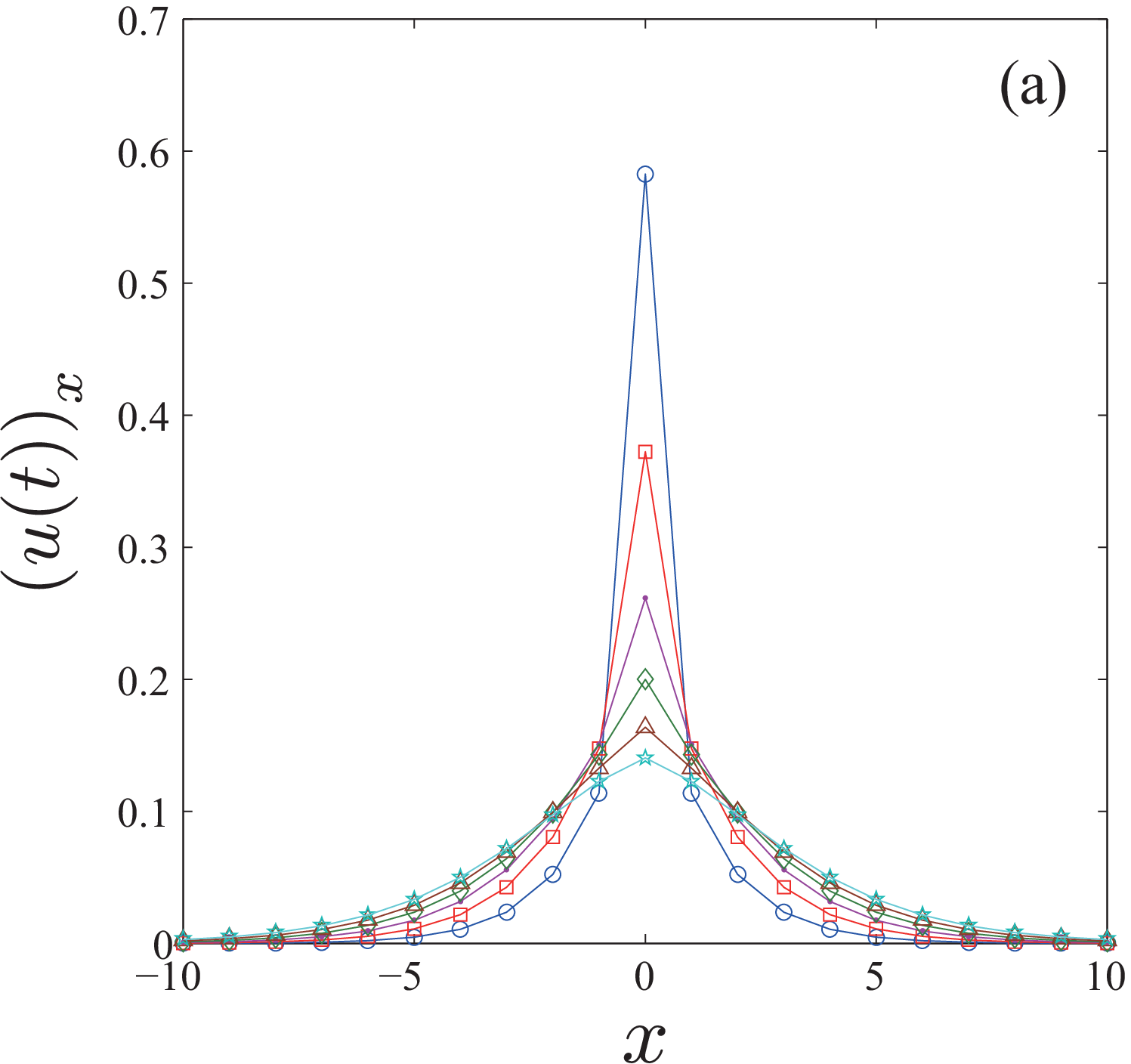}
\hfill
\includegraphics[width=0.48\textwidth]{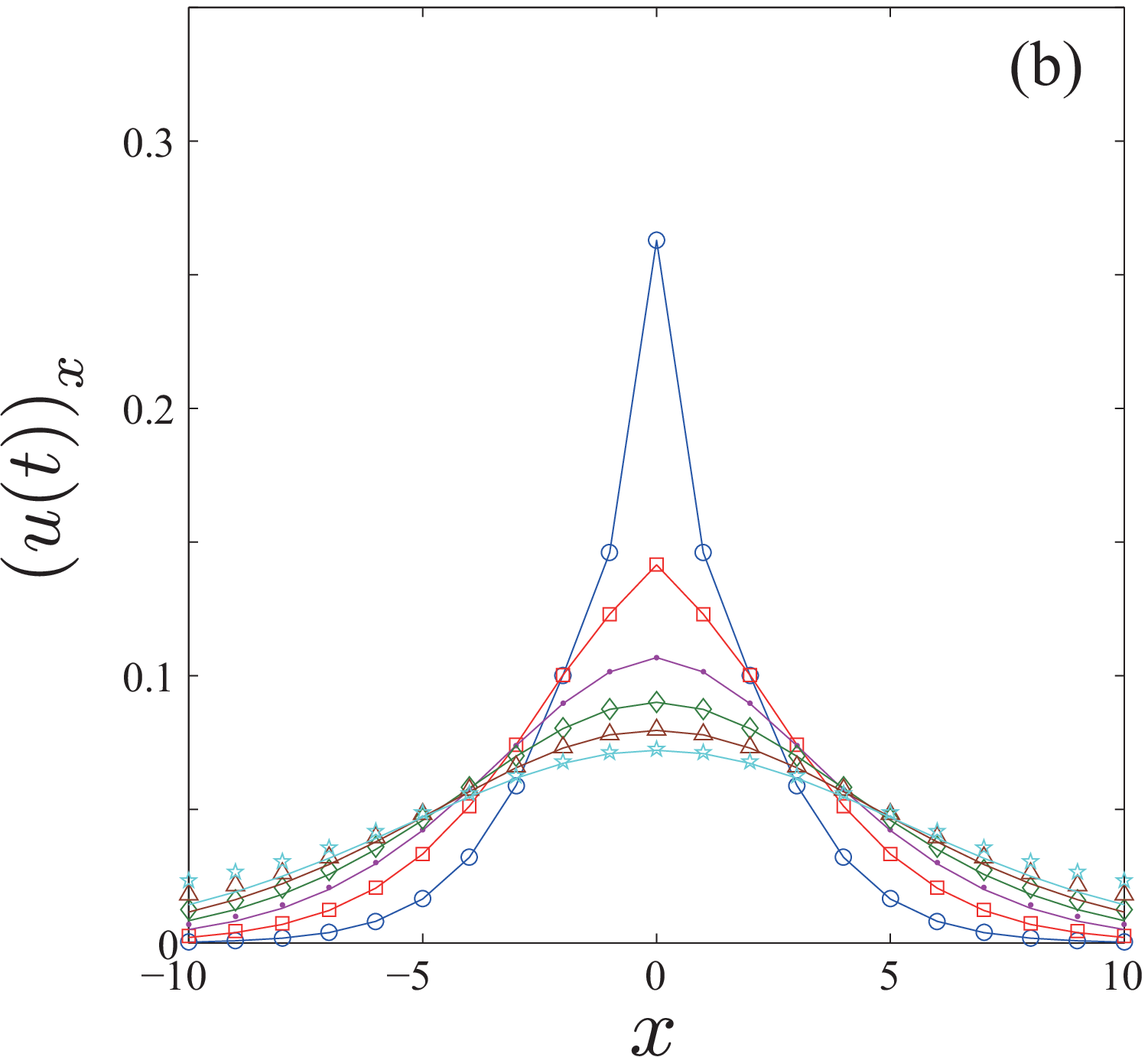}
\\
\vspace{\baselineskip}
\includegraphics[width=0.48\textwidth]{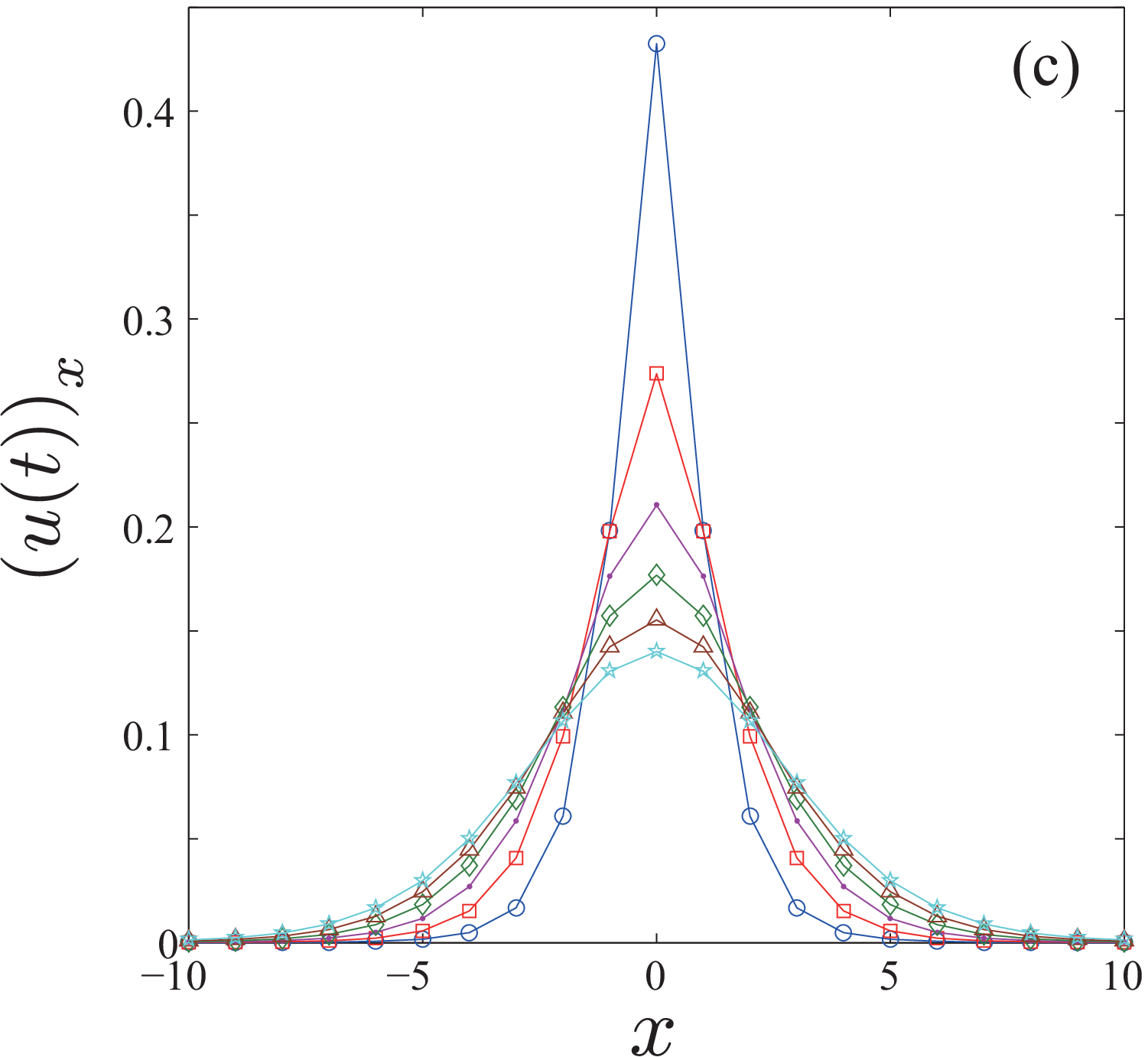}
\caption{The particle density $(u(t))_{x}$ as a function of $x$ for
the transformed $k$-path Laplacians for a linear chain with 21 nodes.
The symbols indicate the results obtained from the simulations of the
linear chain and the solid lines represent the values obtained analytically
from Theorem~\ref{th:time_evol} for different time: $t=0.5$ (circles),
$t=1.0$ (squares), $t=1.5$ (dots), $t=2.0$ (rhombus), $t=2.5$
(triangles), $t=3.0$ (stars). (a) Laplace transform with $\lambda=1$.
(b) Factorial-transform with $z=1$. (c) Mellin-transform with $s=4$.}
\label{densities vs x}
\end{figure}

\section{Generalized diffusion on the path graph}

\noindent
In this section we prove that the density profile $u(t)$ that solves
\begin{align}
  & \frac{\rd}{\rd t}u(t) = -Lu(t),
  \label{evol_equ}\\[1ex]
  & u(0) = e_0,
  \label{evol_equ_ic}
\end{align}
where $e_0$ is as in \eqref{eq:initial condition} and $L$ is any of the
transformed $k$-path Laplacians $\tLL{\lambda}$, $\tLF{z}$ or $\tLM{s}$,
approaches a stable distribution if appropriately scaled.
Stable distributions can be parameterized with four parameters as follows
(see, e.g.\ \cite[\S1.3]{nolan}).
Let $\alpha\in(0,2]$, $\beta\in[-1,1]$, $\gamma>0$ and $\delta\in\RR$;
then the density of the stable distribution $S(\alpha,\beta,\gamma,\delta)$
is given by
\[
  f(\xi;\alpha,\beta,\gamma,\delta)
  = \frac{1}{2\pi}\int_{-\infty}^\infty \phi(z;\alpha,\beta,\gamma,\delta)e^{i\xi z}\rd z,
\]
where
\[
  \phi(z;\alpha,\beta,\gamma,\delta) = \exp\Bigl[-|\gamma z|^\alpha
  \Bigl(1+i\beta\sign(z)\omega(z,\alpha)\Bigr)+i\delta z\Bigr],
  \qquad z\in\RR,
\]
with
\[
  \omega(z,\alpha) = \begin{cases}
    -\tan\dfrac{\pi\alpha}{2} & \text{if}\;\; \alpha\ne1, \\[2ex]
    \dfrac{2}{\pi}\ln|z| & \text{if}\;\; \alpha=1.
  \end{cases}
\]
Note that $\phi(\,\cdot\,;\alpha,\beta,\gamma,\delta)$ is the characteristic function
of the probability distribution $S(\alpha,\beta,\gamma,\delta)$.
We are only interested in the case when $\beta=\delta=0$, which yields the following
simpler function:
\begin{equation}\label{stable00}
  \phi(z;\alpha,0,\gamma,0) = \exp\bigl(-|\gamma z|^\alpha\bigr).
\end{equation}
There are two special cases where the density $f$ can be computed explicitly:
when $\alpha=2$, we obtain a normal distribution with variance $2\gamma^2$, i.e.\
\[
  f(\xi;2,0,\gamma,0) = \frac{1}{2\sqrt{\pi}\,\gamma}\exp\Bigl(-\frac{\xi^2}{4\gamma^2}\Bigr);
\]
when $\alpha=1$, we obtain a Cauchy distribution:
\[
  f(\xi;1,0,\gamma,0) = \frac{1}{\pi}\cdot\frac{\gamma}{\gamma^2+\xi^2}\,.
\]
When $\alpha<2$ the density has a power-like decay:
\begin{equation}\label{stable_decay}
  f(\xi;\alpha,0,\gamma,0) \sim \frac{1}{\pi}\Gamma(\alpha+1)
  \sin\Bigl(\frac{\pi\alpha}{2}\Bigr)\gamma^\alpha\cdot\frac{1}{\xi^{\alpha+1}}
  \qquad\text{as}\;\; \xi\to\pm\infty;
\end{equation}
see, e.g.\ \cite[Theorem~1.12]{nolan}.
Here and in the following we use the following notation:
let $g_1$ and $g_2$ be functions that are defined and positive-valued on
an interval of the form $(a,\infty)$; we write
\[
  g_1(x) \sim g_2(x) \qquad \text{as}\;\; x\to\infty
  \hspace*{8ex} \text{if} \qquad
  \lim_{x\to\infty}\frac{g_1(x)}{g_2(x)} = 1;
\]
a similar notation is used for the behaviour as $x\to0$.
Relation \eqref{stable_decay} implies that $S(\alpha,0,\gamma,0)$
has no finite variance if $\alpha<2$.
Further, if $\alpha\le1$, then even the first moment is infinite.

In the following we study the asymptotic behaviour of the density profile $u(t)$
that solves \eqref{evol_equ} and \eqref{evol_equ_ic}.
It follows from \eqref{evol_repr} that
\begin{equation}\label{evol_e0}
  \bigl(u(t)\bigr)_x = \bigl(e^{-tL}e_0\bigr)_x
  = \frac{1}{2\pi}\int_{-\pi}^\pi e^{ixq}e^{-t\ell(q)}\rd q,
  \qquad x\in\ZZ,\, t\ge0.
\end{equation}

In the next lemma we consider the asymptotic behaviour of integrals as in \eqref{evol_e0}.
We allow also values $\alpha>2$ in the notation $f(z;\alpha,0,\gamma,0)$
although this is not needed later.

\begin{lemma}\label{le:asymp}
Let $\alpha>0$ and let $h:[-\pi,\pi]\to\RR$ be a continuous function that satisfies
\begin{alignat}{2}
  & h(q) > 0 \qquad && \text{for}\;\; q\in[-\pi,\pi]\setminus\{0\},
  \label{lemasymp_assump1}\\[1ex]
  & h(q) \sim c|q|^\alpha \qquad && \text{as}\;\; q\to0
  \label{lemasymp_assump2}
\end{alignat}
with some $c>0$.  Then
\begin{align}
  t^{\frac{1}{\alpha}}\frac{1}{2\pi}\int_{-\pi}^\pi e^{it^{\frac{1}{\alpha}}\xi q}
  e^{-t h(q)}\rd q
  &\to \frac{1}{2\pi}\int_{-\infty}^\infty e^{i\xi z}e^{-c|z|^\alpha}\rd z
  \label{conv_stable}\\[1ex]
  &= f\bigl(\xi;\alpha,0,c^{\frac{1}{\alpha}},0\bigr)
  \label{conv_stable_f}
\end{align}
uniformly in $\xi$ on $\RR$ as $t\to\infty$.

Hence
\begin{equation}\label{lem_sim}
  \frac{1}{2\pi}\int_{-\pi}^\pi e^{ixq}e^{-th(q)}\rd q
  = t^{-\frac{1}{\alpha}}f\bigl(t^{-\frac{1}{\alpha}}x;\alpha,0,c^{\frac{1}{\alpha}},0\bigr)
  + \rmo\bigl(t^{-\frac{1}{\alpha}}\bigr)
\end{equation}
uniformly in $x\in\RR$ as $t\to\infty$.
\end{lemma}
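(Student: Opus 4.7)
The plan is to establish \eqref{conv_stable} by a rescaling followed by dominated convergence, and then to deduce \eqref{lem_sim} by specialising to $\xi=t^{-1/\alpha}x$. Concretely, substituting $z=t^{1/\alpha}q$ in the integral on the left-hand side of \eqref{conv_stable} transforms it into
$$
\frac{1}{2\pi}\int_{-\pi t^{1/\alpha}}^{\pi t^{1/\alpha}} e^{i\xi z}\,e^{-t h(t^{-1/\alpha}z)}\rd z.
$$
By \eqref{lemasymp_assump2} one has $th(t^{-1/\alpha}z)\to c|z|^\alpha$ for each fixed $z\in\RR$, so the integrand converges pointwise to $e^{i\xi z}e^{-c|z|^\alpha}$; the identification with $f(\xi;\alpha,0,c^{1/\alpha},0)$ in \eqref{conv_stable_f} is then immediate from the definition \eqref{stable00} of the stable density.

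The technical core is a uniform lower bound
$$
h(q)\geq c'|q|^\alpha \qquad\text{for all }q\in[-\pi,\pi]
$$
with some $c'>0$. I would obtain this by combining \eqref{lemasymp_assump2}, which gives $h(q)\geq (c/2)|q|^\alpha$ on some neighbourhood $|q|<\delta$, with the compactness of $\{q:\delta\leq|q|\leq\pi\}$ together with the continuity and strict positivity of $h$ there (by \eqref{lemasymp_assump1}), which yield $h(q)\geq m>0$ and hence $h(q)\geq (m/\pi^\alpha)|q|^\alpha$ on that set. The resulting inequality supplies the dominating function $e^{-c'|z|^\alpha}$ for the rescaled integrand on $|z|\leq\pi t^{1/\alpha}$; this function is integrable on $\RR$ and, crucially, is independent of both $t$ and $\xi$.

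Dominated convergence then delivers
$$
\int_{-\pi t^{1/\alpha}}^{\pi t^{1/\alpha}}\bigl|e^{-th(t^{-1/\alpha}z)}-e^{-c|z|^\alpha}\bigr|\rd z \;+\; \int_{|z|>\pi t^{1/\alpha}} e^{-c|z|^\alpha}\rd z \;\longrightarrow\; 0
$$
as $t\to\infty$. Since $|e^{i\xi z}|=1$, this quantity is a $\xi$-independent majorant of $2\pi$ times the modulus of the difference between the rescaled integral and $\frac{1}{2\pi}\int_{-\infty}^\infty e^{i\xi z}e^{-c|z|^\alpha}\rd z$, which gives the asserted uniform convergence in $\xi\in\RR$. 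For \eqref{lem_sim} I would then specialise \eqref{conv_stable} to $\xi=t^{-1/\alpha}x$: after dividing through by $t^{1/\alpha}$ the left-hand side becomes precisely the left-hand side of \eqref{lem_sim}, while the uniformity in $\xi$ transfers to uniformity in $x\in\RR$ of the $\rmo(t^{-1/\alpha})$ remainder.

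The only non-trivial obstacle is the uniform lower bound $h(q)\geq c'|q|^\alpha$, which is the sole place where all three hypotheses \eqref{lemasymp_assump1}, \eqref{lemasymp_assump2}, and continuity of $h$ are used simultaneously. Everything else reduces to the routine observation that the oscillating factor $e^{i\xi z}$ has modulus one, so every estimate in the argument can be made independently of $\xi$, automatically giving uniform convergence.
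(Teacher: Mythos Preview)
Your proposal is correct and follows essentially the same route as the paper: the same substitution $z=t^{1/\alpha}q$, the same decomposition into the difference over $[-\pi t^{1/\alpha},\pi t^{1/\alpha}]$ plus the tail of $e^{-c|z|^\alpha}$, the same lower bound $h(q)\ge c'|q|^\alpha$ to produce a $\xi$- and $t$-independent dominating function, and the same specialisation $\xi=t^{-1/\alpha}x$ for \eqref{lem_sim}. The only cosmetic difference is that the paper bounds the integrand by the sum $e^{-\tilde c|z|^\alpha}+e^{-c|z|^\alpha}$ rather than a single exponential, and it states the lower bound more tersely (just noting that $q\mapsto h(q)/|q|^\alpha$ is bounded below by continuity and \eqref{lemasymp_assump1}--\eqref{lemasymp_assump2}), whereas you spell out the near-$0$/away-from-$0$ split explicitly.
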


\begin{proof}
Let $t>0$.
With the substitution $z=t^{\frac{1}{\alpha}}q$ we have
\begin{align}
  & \Biggl|\,t^{\frac{1}{\alpha}}\frac{1}{2\pi}\int_{-\pi}^\pi e^{it^{\frac{1}{\alpha}}\xi q}
  e^{-t h(q)}\rd q
  - \frac{1}{2\pi}\int_{-\infty}^\infty e^{i\xi z}e^{-c|z|^\alpha}\rd z\Biggr|
  \notag\\[1ex]
  &= \Biggl|\frac{1}{2\pi}\int_{-\pi t^{\frac{1}{\alpha}}}^{\pi t^{\frac{1}{\alpha}}}
  e^{i\xi z}e^{-th(t^{-\frac{1}{\alpha}}z)}\rd z
  - \frac{1}{2\pi}\int_{-\infty}^\infty e^{i\xi z}e^{-c|z|^\alpha}\rd z\Biggr|
  \notag\\[1ex]
  &\le \Biggl|\frac{1}{2\pi}\int_{-\pi t^{\frac{1}{\alpha}}}^{\pi t^{\frac{1}{\alpha}}}
  e^{i\xi z}\Bigl(e^{-th(t^{-\frac{1}{\alpha}}z)}-e^{-c|z|^\alpha}\Bigr)\rd z\Biggr|
  \notag\\[1ex]
  &\quad + \Biggl|\frac{1}{2\pi}
  \int\limits_{\RR\setminus[-\pi t^{\frac{1}{\alpha}},\pi t^{\frac{1}{\alpha}}]}
  e^{i\xi z}e^{-c|z|^\alpha}\rd z\Biggr|
  \notag\displaybreak[0]\\[1ex]
  &\le \frac{1}{2\pi}\int_{-\pi t^{\frac{1}{\alpha}}}^{\pi t^{\frac{1}{\alpha}}}
  \Bigl|e^{-t h(t^{-\frac{1}{\alpha}}z)}-e^{-c|z|^\alpha}\Bigr|\rd z
  \label{int1}\\[1ex]
  &\quad + \frac{1}{2\pi}
  \int\limits_{\RR\setminus[-\pi t^{\frac{1}{\alpha}},\pi t^{\frac{1}{\alpha}}]}
  e^{-c|z|^\alpha}\rd z.
  \label{int2}
\end{align}
First note that the integrals in \eqref{int1} and \eqref{int2} are independent of $\xi$.
We show that both integrals converge to $0$ as $t\to\infty$.
For the integral in \eqref{int2} this is clear.
Let us now consider the integral in \eqref{int1}.
Since $h$ is continuous and satisfies \eqref{lemasymp_assump1} and \eqref{lemasymp_assump2},
the function $q\mapsto h(q)/|q|^\alpha$ is bounded below by a positive constant,
i.e.\ there exists $\tilde c>0$ such that
\[
  h(q) \ge \tilde c|q|^\alpha \qquad\text{for}\;\; q\in[-\pi,\pi].
\]
This implies that the integrand in \eqref{int1} satisfies
\begin{align*}
  & \Bigl|e^{-t h(t^{-\frac{1}{\alpha}}z)}-e^{-c|z|^\alpha}\Bigr|
  \le e^{-t h(-t^{-\frac{1}{\alpha}}z)} + e^{-c|z|^\alpha}
  \\[1ex]
  &\le e^{-t\tilde c|t^{-\frac{1}{\alpha}}z|^\alpha} + e^{-c|z|^\alpha}
  = e^{-\tilde c|z|^\alpha} + e^{-c|z|^\alpha}
\end{align*}
for $z\in[-\pi t^{\frac{1}{\alpha}},\pi t^{\frac{1}{\alpha}}]$.
Therefore the integrand in \eqref{int1} is bounded by the integrable function
$z\mapsto e^{-\tilde c|z|^\alpha} + e^{-c|z|^\alpha}$, which is independent of $t$.
For fixed $z\in\RR$ we have
\[
  t h\bigl(t^{-\frac{1}{\alpha}}z\bigr)
  = |z|^\alpha\frac{h(t^{-\frac{1}{\alpha}}z)}{|t^{-\frac{1}{\alpha}}z|^\alpha}
  \to c|z|^\alpha \qquad \text{as}\;\; t\to\infty
\]
by \eqref{lemasymp_assump2} and hence
\[
  \Bigl|e^{-t h(t^{-\frac{1}{\alpha}}z)}-e^{-c|z|^\alpha}\Bigr|
  \to 0 \qquad \text{as}\;\; t\to\infty.
\]
Now the Dominated Convergence Theorem implies that the integral in \eqref{int1}
converges to $0$ as $t\to\infty$.  This shows \eqref{conv_stable}.

Finally, we prove \eqref{lem_sim}.
With the substitution $x=t^{\frac{1}{\alpha}}\xi$
we obtain from \eqref{conv_stable} and \eqref{conv_stable_f} that
\begin{align*}
  & \frac{1}{t^{-\frac{1}{\alpha}}}
  \Biggl|\frac{1}{2\pi}\int_{-\pi}^\pi e^{ixq}e^{-th(q)}\rd q
  - t^{-\frac{1}{\alpha}}f\bigl(t^{-\frac{1}{\alpha}}x;\alpha,0,c^{\frac{1}{\alpha}},0\bigr)\Biggr|
  \\[1ex]
  &= \Biggl|t^{\frac{1}{\alpha}}\frac{1}{2\pi}\int_{-\pi}^\pi e^{ixq}e^{-th(q)}\rd q
  - f\bigl(t^{-\frac{1}{\alpha}}x;\alpha,0,c^{\frac{1}{\alpha}},0\bigr)\Biggr|
  \to 0
\end{align*}
uniformly in $x\in\RR$ as $t\to\infty$, which shows \eqref{lem_sim}.
\end{proof}

\begin{remark}\label{re:scaling}
The lemma can be interpreted as follows.  If the function
\[
  g(x,t) \defeq \frac{1}{2\pi}\int_{-\pi}^\pi e^{ixq}e^{-th(q)}\rd q
\]
is scaled in the independent and the dependent variable, then it converges:
\[
  t^{\frac{1}{\alpha}}g\bigl(t^{\frac{1}{\alpha}}\xi,t\bigr)
  \to f\bigl(\xi;\alpha,0,c^{\frac{1}{\alpha}},0\bigr)
  \qquad\text{as}\;\; t\to\infty.
\]
This means that the profile spreads proportionally to $t^{\frac{1}{\alpha}}$.
The solution $(u(t))_x$ is defined only for $x\in\ZZ$.
Scaling this discrete profile in the same way leads to a sequence of points:
\[
  \Bigl(t^{-\frac{1}{\alpha}}x,t^{\frac{1}{\alpha}}\bigl(u(t)\bigr)_x\Bigr),
  \qquad x\in\ZZ,
\]
for each $t\ge0$; the points lie on the graph of the
function $\xi\mapsto t^{\frac{1}{\alpha}}g(t^{\frac{1}{\alpha}}\xi,t)$.
These sequences of points become denser as $t$ growths and converge
to the limiting profile $f(\xi;\alpha,0,c^{\frac{1}{\alpha}},0)$ as $t\to\infty$.
In particular, the maximum height, which is attained at $0$, decreases like
\begin{equation}\label{peak}
  \bigl(u(t)\bigr)_0 \sim t^{-\frac{1}{\alpha}}f\bigl(0;\alpha,0,c^{\frac{1}{\alpha}},0\bigr)
  = \frac{\Gamma\bigl(\frac{\alpha+1}{\alpha}\bigr)}{\pi c^{\frac{1}{\alpha}}}
  t^{-\frac{1}{\alpha}},
  \qquad\text{as}\;\; t\to\infty.
\end{equation}
The full width at half maximum (FWHM) increases like
\begin{equation}\label{FWHM}
  \FWHM(t) \sim 2\xi_0t^{\frac{1}{\alpha}}
  \qquad\text{as}\;\; t\to\infty,
\end{equation}
where $\xi_0>0$ is such that
$f(\xi_0;\alpha,0,c^{\frac{1}{\alpha}},0)=\frac{1}{2}f(0;\alpha,0,c^{\frac{1}{\alpha}},0)$.
This implies that if $\alpha=2$, then one has normal diffusion,
and if $\alpha<2$, then the time evolution is superdiffusive
since $(\FWHM(t))^2\sim c t^\kappa$ with $\kappa=\frac{2}{\alpha}$.
We used the square of the full width at half maximum $\FWHM^2$ instead of
the mean square displacement $\mathrm{MSD}$ because the latter is infinite
if $\alpha<2$.
\end{remark}

\subsection{Diffusion for the Laplace- and factorial-transformed {\boldmath$k$}-path Laplacians}

In the next theorem we show that the time evolution with the $k$-path Laplacians
and the Laplace-transformed and factorial-transformed $k$-path Laplacians show
normal diffusion (see, e.g.\ \cite{delmotte99} for the case $L=L_1$).
This is caused by the fact that $\ell_k$, $\tellL{\lambda}$
and $\tellF{z}$ behave quadratically around $0$.

\begin{theorem}\label{th:asympLaplFact}
Let $P_\infty$ be the infinite path graph, let $\lambda,z>0$ and
let $\tLL{\lambda}$ and $\tLF{z}$ be the Laplace-transformed and
factorial-transformed $k$-path Laplacian with parameters $\lambda$ and $z$, respectively.
Moreover, let $u(t)$ be the solution of \eqref{evol_equ}, \eqref{evol_equ_ic}
with $L=L_k$, $L=\tLL{\lambda}$ or $L=\tLF{z}$.  Then
\begin{equation}\label{asymp_LF}
  \bigl(u(t)\bigr)_x = t^{-\frac{1}{2}}\frac{1}{2\sqrt{\pi a}\,}
  \exp\biggl(-\frac{x^2}{4at}\biggr) + \rmo\bigl(t^{-\frac{1}{2}}\bigr)
  \qquad \text{as}\;\; t\to\infty
\end{equation}
uniformly in $x\in\ZZ$ where
\begin{alignat}{2}
  a &= k^2 \qquad & & \text{for}\;\; L=L_k,
  \label{ak}\\[1ex]
  a &= \frac{e^{\lambda}\left(e^{\lambda}+1\right)}{(e^{\lambda}-1)^{3}}
  = \frac{\coth\bigl(\frac{\lambda}{2}\bigr)}{2(\cosh\lambda-1)}
  \qquad &
  & \text{for}\;\; L=\tLL{\lambda},
  \label{aL}\\[1ex]
  a &= z(z+1)e^{z} \qquad &
  & \text{for}\;\; L=\tLF{z}.
  \label{aF}
\end{alignat}
\end{theorem}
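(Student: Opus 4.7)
The plan is to apply Lemma~\ref{le:asymp} with $\alpha=2$ directly to the Fourier representation \eqref{evol_e0}. For each of the three choices of $L$ the solution satisfies $(u(t))_x = \frac{1}{2\pi}\int_{-\pi}^\pi e^{ixq}e^{-t\ell(q)}\rd q$, so the lemma yields $(u(t))_x = t^{-1/2}f(t^{-1/2}x;2,0,\sqrt{c},0) + \rmo(t^{-1/2})$ uniformly in $x\in\RR$, where $c$ is the quadratic coefficient of $\ell$ at $q=0$. Since for $\alpha=2$ the stable density reduces to the Gaussian $f(\xi;2,0,\sqrt{c},0) = \frac{1}{2\sqrt{\pi c}}\exp(-\xi^2/(4c))$, setting $\xi=t^{-1/2}x$ and identifying $c$ with the constant $a$ in \eqref{ak}--\eqref{aF} reproduces the right-hand side of \eqref{asymp_LF}.

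The bulk of the work is then a routine computation of this quadratic coefficient in each case. For $\ell_k(q) = 2(1-\cos kq)$ a direct Taylor expansion gives $c=k^2$, matching \eqref{ak}. For $\tellL{\lambda}$ I would use the closed form \eqref{reptellL}: since $1-\cos q = q^2/2 + O(q^4)$ and $\cosh\lambda-\cos q = (\cosh\lambda-1) + O(q^2)$, the ratio at $q=0$ equals $(e^\lambda+1)/[2(e^\lambda-1)(\cosh\lambda-1)]$, and the identity $\cosh\lambda-1 = e^{-\lambda}(e^\lambda-1)^2/2$ simplifies this to $e^\lambda(e^\lambda+1)/(e^\lambda-1)^3$, matching \eqref{aL}. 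For $\tellF{z}$ I would expand $e^{z\cos q}\cos(z\sin q) = e^z\bigl[1-\tfrac{1}{2}zq^2 + O(q^4)\bigr]\bigl[1-\tfrac{1}{2}z^2q^2 + O(q^4)\bigr] = e^z - \tfrac{1}{2}e^z z(z+1)q^2 + O(q^4)$, whence $\tellF{z}(q) = e^z z(z+1)q^2 + O(q^4)$ and $c = z(z+1)e^z$, matching \eqref{aF}.

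The positivity hypothesis \eqref{lemasymp_assump1} of Lemma~\ref{le:asymp} is supplied for $\tellL{\lambda}$ and $\tellF{z}$ by \eqref{lpos}, so the application there is immediate. The main --- and only substantive --- obstacle is the case $L=L_k$ with $k\ge 2$, where $\ell_k$ has additional zeros at $q = 2\pi j/k$ for $0<|j|<k/2$ and the lemma does not apply as stated. I plan to circumvent this by the substitution $p=kq$ in \eqref{evol_e0}, which exploits that $e^{-t\ell_k(q)}$ has period $2\pi/k$: after the substitution the integrand becomes a $2\pi$-periodic factor times the phase $e^{ixp/k}$, and summing the geometric series $\sum_{j=0}^{k-1} e^{2\pi ijx/k}$ over the $k$ fundamental-domain translates shows that $(u(t))_x$ vanishes unless $x\in k\ZZ$, while on $k\ZZ$ the integral reduces to the $L_1$ evolution, to which Lemma~\ref{le:asymp} applies just as in the case $k=1$. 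This reduction is where essentially all the care of the proof is concentrated; the rest is mechanical Taylor computation.
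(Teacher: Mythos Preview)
For $L=\tLL{\lambda}$ and $L=\tLF{z}$ your plan is correct and is exactly what the paper does: expand the symbol to second order at $q=0$ and invoke Lemma~\ref{le:asymp} with $\alpha=2$.

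For $L=L_k$ with $k\ge2$ you have correctly spotted a gap that the paper simply glosses over --- $\ell_k$ has extra zeros at $q=2\pi j/k$, so hypothesis \eqref{lemasymp_assump1} of Lemma~\ref{le:asymp} fails --- and your periodicity reduction is also correct: the substitution $p=kq$ together with the geometric sum gives $(u(t))_x=0$ for $x\notin k\ZZ$, while $(u(t))_{km}=\frac{1}{2\pi}\int_{-\pi}^{\pi}e^{imp}e^{-t\ell_1(p)}\rd p$. But you must then carry the computation to its end rather than assert that it ``applies just as in the case $k=1$''. Lemma~\ref{le:asymp} applied to $\ell_1$ (so $c=1$) yields
\[
  (u(t))_{km}=t^{-\frac12}\,\frac{1}{2\sqrt{\pi}}\exp\Bigl(-\frac{m^2}{4t}\Bigr)+\rmo\bigl(t^{-\frac12}\bigr)
  =t^{-\frac12}\,\frac{1}{2\sqrt{\pi}}\exp\Bigl(-\frac{x^2}{4k^2t}\Bigr)+\rmo\bigl(t^{-\frac12}\bigr),
\]
which is $k$ times the value asserted by \eqref{asymp_LF} with $a=k^2$; and at any $x\notin k\ZZ$ the true value is exactly $0$, whereas \eqref{asymp_LF} predicts $\sim(2k\sqrt{\pi t})^{-1}$. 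So your reduction does not prove the stated formula for $L_k$, $k\ge2$ --- it \emph{refutes} it. The underlying reason is structural: the $L_k$-evolution on $P_\infty$ decouples into $k$ independent $L_1$-evolutions on the cosets $j+k\ZZ$, so the mass starting at $0$ stays on $k\ZZ$ and the peak height is that of the $L_1$ heat kernel, not $1/k$ of it. The paper's one-line appeal to Lemma~\ref{le:asymp} is therefore not merely incomplete here; the claim \eqref{asymp_LF} with $a=k^2$ is false as written for $k\ge2$, and your own analysis is what shows it.
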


\begin{proof}
The asymptotic behaviour of the functions $\ell_k$, $\tellL{\lambda}$ and $\tellF{z}$
from \eqref{defellk}, \eqref{reptellL} and \eqref{reptellF} is $\ell_k(q)\sim k^2q^2$
as $q\to0$,
\[
  \tellL{\lambda}(q)
  = \frac{e^\lambda+1}{(e^\lambda-1)(\cosh\lambda-1)}\cdot\frac{q^2}{2} + \rmO\bigl(q^4\bigr)
  = aq^2 + \rmO\bigl(q^4\bigr) \qquad \text{as}\;\; q\to0
\]
with $a$ from \eqref{aL} and
\begin{align*}
  \tellF{z}(q) &= 2\biggl[e^z-e^z\biggl(1-\frac{zq^2}{2}+\rmO\bigl(q^4\bigr)\biggr)
  \biggl(1-\frac{z^2q^2}{2}+\rmO\bigl(q^4\bigr)\biggr)\biggr]
  \\[1ex]
  &= e^z(z+z^2)q^2 + \rmO\bigl(q^4\bigr)
  \qquad \text{as}\;\; q\to0
\end{align*}
with $a$ from \eqref{aF}, respectively.
Now \eqref{asymp_LF} follows from \eqref{evol_e0} and Lemma~\ref{le:asymp}.
\end{proof}

\begin{figure}
\centering
\includegraphics[width=0.6\textwidth]{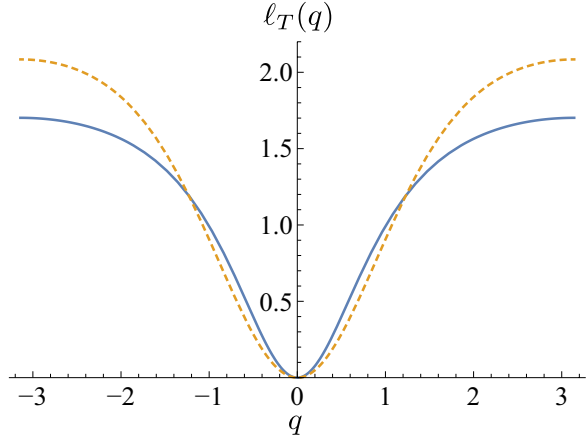}
\caption{The graphs of the functions $\tellL{\lambda}$ with $\lambda=1$ (blue solid curve)
and $\tellF{z}$ with $z=1/2$ (orange broken curve) on the interval $[-\pi,\pi]$.}
\label{fig3}
\end{figure}

\begin{remark}
Theorem~\ref{th:asympLaplFact} shows that the diffusion for the $k$-path Laplacian,
the Laplace-transformed and the factorial-transformed $k$-path Laplacian
is always normal.
The peak height of the distribution is attained at $x=0$ and behaves like
\[
  \bigl(u(t)\bigr)_0 \sim \frac{1}{2\sqrt{\pi a}\,}t^{-\frac{1}{2}}
  \qquad\text{as}\;\; t\to\infty,
\]
where $a$ is from \eqref{ak}--\eqref{aF}; see \eqref{peak}.
The mean square displacement behaves like
\[
  \mathrm{MSD}(t) \sim 2at \qquad\text{as}\;\; t\to\infty
\]
and the full width at half maximum (FWHM) behaves like
\[
  \FWHM(t) \sim 2\sqrt{(\ln 2)a}\,t^{\frac{1}{2}}
  \qquad\text{as}\;\; t\to\infty;
\]
see \eqref{FWHM}.
For the limiting behaviour after rescaling in $x$ see Remark~\ref{re:scaling}.
\end{remark}

\begin{figure}
\centering
\includegraphics[width=0.48\textwidth]{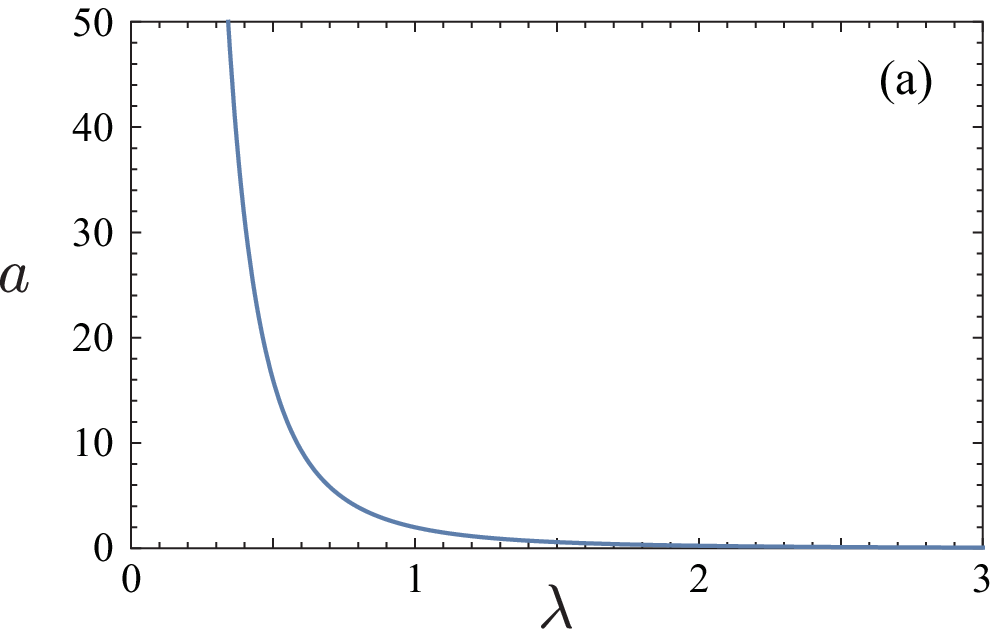}
\hfill
\includegraphics[width=0.48\textwidth]{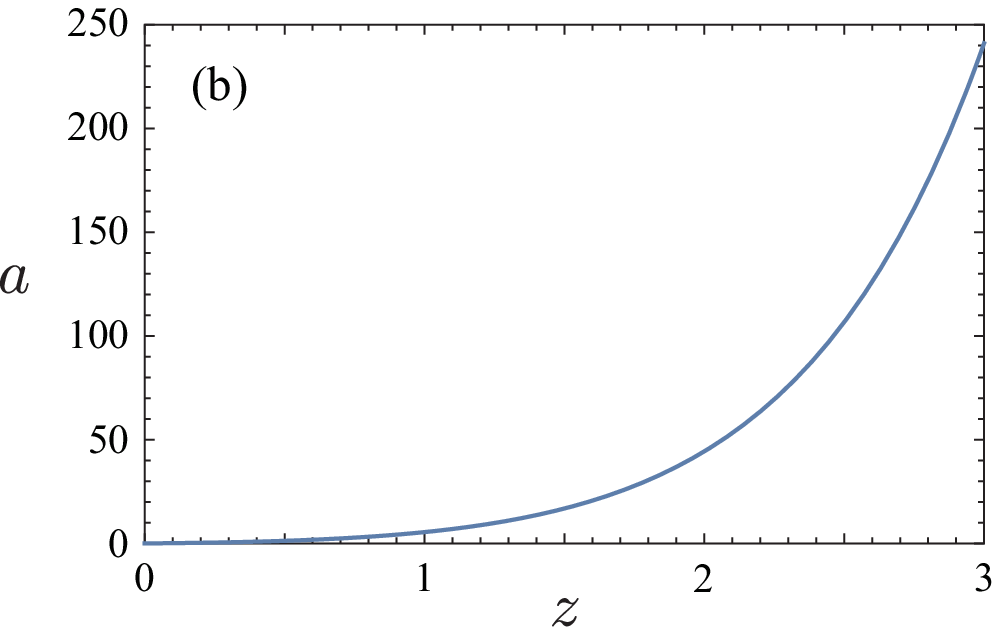}
\caption{The parameter dependence of $a$ for (a) the Laplace-transformed and
(b) the factorial-transformed $k$-path Laplacian.}
\label{fig4}
\end{figure}

\subsection{Diffusion for the Mellin-transformed {\boldmath$k$}-path Laplacian}

For the Mellin-transformed $k$-path Laplacian, the density profile shows
superdiffusion for $1<s<3$ and normal diffusion for $s>3$.

\begin{theorem}\label{th:asympMellin}
Let $P_\infty$ be the infinite path graph, let $s>1$ and let $\tLM{s}$ be the
Mellin-transformed $k$-path Laplacian with parameter $s$.
Moreover, let $u(t)$ be the solution of \eqref{evol_equ} and \eqref{evol_equ_ic}
with $L=\tLM{s}$.  Then
\begin{equation}\label{asymp_mellin}
  \bigl(u(t)\bigr)_x = t^{-\frac{1}{\alpha}}f\bigl(t^{-\frac{1}{\alpha}}x;\alpha,0,\gamma,0\bigr)
  + \rmo\bigl(t^{-\frac{1}{\alpha}}\bigr)
  \qquad \text{as}\;\; t\to\infty
\end{equation}
uniformly in $x\in\ZZ$ where
\begin{alignat}{3}
  \alpha &= s-1, \quad &
  \gamma &= \biggl(-\frac{\pi}{\Gamma(s)\cos\bigl(\frac{\pi s}{2}\bigr)}\biggr)^{\frac{1}{s-1}}
  \qquad &
  & \text{if}\;\; 1<s<3,
  \label{alphagamma1}\\[1ex]
  \alpha &= 2, \quad &
  \gamma &= \sqrt{\zeta(s-2)} \qquad &
  & \text{if}\;\; s>3.
  \label{alphagamma2}
\end{alignat}
In the case $1<s<3$, the (rescaled) limit distribution has the following
asymptotic behaviour:
\begin{equation}\label{asymp_xi_large_mellin}
  f(\xi;\alpha,0,\gamma,0) \sim \frac{1}{\xi^s}
  \qquad\text{as}\;\; \xi\to\pm\infty,
\end{equation}
where $\alpha$ and $\gamma$ are as in \eqref{alphagamma1}.
\end{theorem}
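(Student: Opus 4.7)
The plan is to apply Lemma~\ref{le:asymp} with $h=\tellM{s}$, so that \eqref{asymp_mellin} follows from \eqref{lem_sim} once I identify the correct exponent $\alpha$ and leading coefficient $c$ in $\tellM{s}(q)\sim c|q|^\alpha$ as $q\to 0$. The positivity condition \eqref{lemasymp_assump1} and continuity of $\tellM{s}$ on $[-\pi,\pi]$ are already given by the lemma on representations of $\tellL{\lambda},\tellF{z},\tellM{s}$. Once $\alpha$ and $c$ are in hand, the identification $\gamma=c^{1/\alpha}$ matches \eqref{alphagamma1}, \eqref{alphagamma2}, and the tail estimate \eqref{asymp_xi_large_mellin} will follow by plugging into the universal stable-law tail \eqref{stable_decay}.

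For the case $s>3$ I would work directly from the series
\[
  \tellM{s}(q) = 2\sum_{k=1}^\infty \frac{1-\cos(kq)}{k^s}
\]
and expand $1-\cos(kq)=\tfrac12 k^2q^2+\rmO(k^4q^4)$. Since $s>3$ makes $\sum k^{2}/k^s=\zeta(s-2)$ absolutely convergent, one can interchange sum and expansion (or use dominated convergence after bounding the remainder by $c\,k^4q^4/k^s$, which is summable for $s>5$, and use $|1-\cos\theta|\le\min(\theta^2/2,2)$ to handle $3<s\le 5$ by splitting the sum at $k\sim 1/|q|$). This yields $\tellM{s}(q)=\zeta(s-2)q^2+\rmo(q^2)$, giving $\alpha=2$ and $c=\zeta(s-2)$, hence $\gamma=\sqrt{\zeta(s-2)}$ as claimed.

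For the case $1<s<3$ the series $\sum k^{2-s}$ diverges, so a naive Taylor expansion is not available; this is the main obstacle. I would instead use the classical Jonqui\`ere/Wood asymptotic expansion of the polylogarithm at $z=1$: for non-integer $s$ and $|q|$ small,
\[
  \Li_s(e^{iq}) = \Gamma(1-s)(-iq)^{s-1} + \sum_{k=0}^\infty \zeta(s-k)\frac{(iq)^k}{k!}\,.
\]
Adding the analogous expansion for $\Li_s(e^{-iq})$ and combining $(iq)^{s-1}+(-iq)^{s-1}=2|q|^{s-1}\cos\bigl(\tfrac{\pi(s-1)}{2}\bigr)$, the constant terms cancel $2\zeta(s)$ in \eqref{reptellM}, and one obtains
\[
  \tellM{s}(q) = -2\Gamma(1-s)\cos\Bigl(\tfrac{\pi(s-1)}{2}\Bigr)|q|^{s-1} + \zeta(s-2)q^2 + \rmo(q^2).
\]
Since $s-1\in(0,2)$, the $|q|^{s-1}$ term dominates. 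Using $\cos\bigl(\tfrac{\pi(s-1)}{2}\bigr)=\sin\bigl(\tfrac{\pi s}{2}\bigr)$ together with the reflection formula $\Gamma(s)\Gamma(1-s)=\pi/\sin(\pi s)$ and $\sin(\pi s)=2\sin(\tfrac{\pi s}{2})\cos(\tfrac{\pi s}{2})$ simplifies the coefficient to $c=-\pi/(\Gamma(s)\cos(\pi s/2))$, which is positive for $s\in(1,3)$ because $\cos(\pi s/2)<0$ on this range. Setting $\alpha=s-1$ and $\gamma=c^{1/\alpha}$ reproduces \eqref{alphagamma1}, and Lemma~\ref{le:asymp} delivers \eqref{asymp_mellin}. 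For the tail \eqref{stable_decay} plugged in with these values of $\alpha,\gamma$, after using $\sin(\pi\alpha/2)=-\cos(\pi s/2)$ and $\gamma^\alpha=-\pi/(\Gamma(s)\cos(\pi s/2))$, the constants telescope to $1$, yielding $f(\xi;\alpha,0,\gamma,0)\sim \xi^{-s}$ and hence \eqref{asymp_xi_large_mellin}. The only nontrivial point to double-check is the integer cases $s=2$ (where $\Gamma(1-s)$ has a pole compensated by the zero of $\cos(\pi(s-1)/2)$) and the exclusion of the boundary $s=3$, which is consistent with the statement; a limiting argument or a direct integral-representation proof of the polylogarithm expansion handles these robustly.
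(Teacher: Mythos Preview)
Your proposal is correct and follows essentially the same route as the paper: both identify the leading behaviour of $\tellM{s}(q)$ near $q=0$ via the polylogarithm expansion $\Li_s(e^{iq})=\Gamma(1-s)(-iq)^{s-1}+\sum_{n\ge0}\zeta(s-n)(iq)^n/n!$, simplify the coefficient with the reflection formula, and then invoke Lemma~\ref{le:asymp} and the tail asymptotics \eqref{stable_decay}. The only cosmetic difference is that the paper uses this single expansion to obtain both the $|q|^{s-1}$ and the $\zeta(s-2)q^2$ terms simultaneously (so the case split between $1<s<3$ and $s>3$ reduces to reading off which term dominates), whereas you treat $s>3$ by a separate, direct Taylor expansion of the series---both are fine, but the unified expansion is slightly cleaner and avoids the splitting estimate you sketch for $3<s\le 5$.
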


Note that in the case when $s>3$ the limiting distribution is a normal distribution
and hence
\[
  \bigl(u(t)\bigr)_x = t^{-\frac{1}{2}}\frac{1}{2\sqrt{\pi\zeta(s-2)}\,}
  \exp\biggl(-\frac{x^2}{4\zeta(s-2)t}\biggr) + \rmo\bigl(t^{-\frac{1}{2}}\bigr)
  \qquad \text{as}\;\; t\to\infty;
\]
when $s=2$, the limiting distribution is a Cauchy distribution and hence
\[
  \bigl(u(t)\bigr)_x = \frac{t}{x^2+\pi^2t^2} + \rmo\bigl(t^{-1}\bigr)
  \qquad\text{as}\;\; t\to\infty.
\]

\begin{proof}
We consider the behaviour of $\tellM{s}$ from \eqref{reptellM} at $0$.
Let $s>1$ with $s\notin\NN$.
It follows from \cite[25.12.12]{nist_handbook} that
\[
  \Li_s(e^z) = \Gamma(1-s)(-z)^{s-1} + \sum_{n=0}^\infty \zeta(s-n)\frac{z^n}{n!}\,,
  \qquad |z|<2\pi,\; z\notin(0,\infty),
\]
which yields
\begin{align}
  \tellM{s}(q) &= 2\zeta(s)-\Li_s(e^{iq})-\Li_s(e^{-iq})
  \notag\\[1ex]
  &= 2\zeta(s)-\Gamma(1-s)\Bigl((-iq)^{s-1}+(iq)^{s-1}\Bigr)
  - \sum_{n=0}^\infty \zeta(s-n)\frac{(iq)^n+(-iq)^n}{n!}
  \displaybreak[0]\notag\\[1ex]
  &= -\Gamma(1-s)|q|^{s-1}\Bigl(e^{-i(s-1)\frac{\pi}{2}}+e^{i(s-1)\frac{\pi}{2}}\Bigr)
  - \sum_{n=1}^\infty \zeta(s-n)\frac{(iq)^n+(-iq)^n}{n!}
  \notag\displaybreak[0]\\[1ex]
  &= -\frac{\pi}{\Gamma(s)\sin(\pi s)}|q|^{s-1}2\cos\biggl(\frac{(s-1)\pi}{2}\biggr)
  - \sum_{l=1}^\infty \zeta(s-2l)\frac{2(-1)^l q^{2l}}{(2l)!}
  \notag\\[1ex]
  &= -\frac{\pi}{\Gamma(s)\cos\bigl(\frac{s\pi}{2}\bigr)}|q|^{s-1}
  - \sum_{l=1}^\infty \zeta(s-2l)\frac{2(-1)^l q^{2l}}{(2l)!}
  \label{expan_ellM1}\\[1ex]
  &= -\frac{\pi}{\Gamma(s)\cos\bigl(\frac{s\pi}{2}\bigr)}|q|^{s-1}
  + \zeta(s-2)q^2 + \rmO(q^4)
  \label{expan_ellM2}
\end{align}
as $q\to0$.  By continuity \eqref{expan_ellM1} and hence \eqref{expan_ellM2} are
also valid for $s=2$.
If $s<3$, then the first term in \eqref{expan_ellM2} is dominating; if $s>3$, then
the second term is dominating.  Hence
\[
  \tellM{s}(q) \sim \begin{cases}
    -\dfrac{\pi}{\Gamma(s)\cos\bigl(\frac{s\pi}{2}\bigr)}|q|^{s-1}
    & \text{if}\;\; 1<s<3,
    \\[3ex]
    \zeta(s-2)q^2 & \text{if}\;\; s>3
  \end{cases}
\]
as $q\to0$.
Now \eqref{asymp_mellin} follows from \eqref{evol_e0} and Lemma~\ref{le:asymp}.

To show \eqref{asymp_xi_large_mellin}, we use \eqref{stable_decay}, which yields
\[
  f(\xi;s-1,0,\gamma,0)
  \sim \frac{1}{\pi}\Gamma(s)\sin\biggl(\frac{\pi(s-1)}{2}\biggr)
  \frac{-\pi}{\Gamma(s)\cos\bigl(\frac{\pi s}{2}\bigr)}\cdot\frac{1}{\xi^s}
  = \frac{1}{\xi^s}
\]
as $\xi\to\pm\infty$.
\end{proof}

When $s=3$, the asymptotic expansion of $\tellM{s}(q)$ involves a logarithmic term,
which implies that the asymptotic behaviour of $(u(t))_x$ is more complicated.

\begin{figure}
\centering
\includegraphics[width=0.6\textwidth]{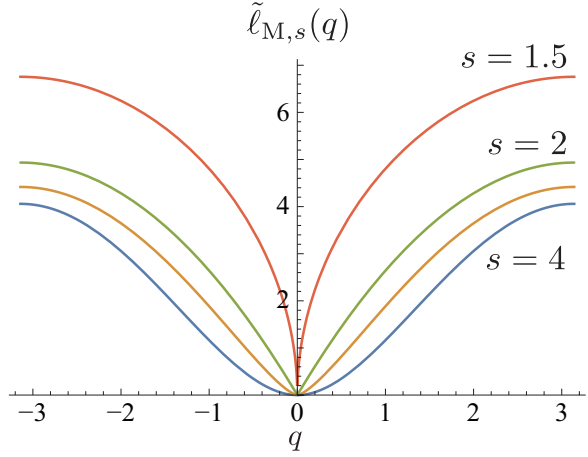}
\caption{The graphs of the functions $\tellM{s}$ on the interval $[-\pi,\pi]$.
The parameter $s$ is varied from top to the bottom
as $1.5$, $2$, $2.5$ (not labelled) and $4$.}
\label{fig6}
\end{figure}

\begin{remark}
In Fig.~\ref{fig7} we plot the density profiles for various times when the
time evolution is governed by \eqref{evol_equ} with $L$ being the Mellin-transformed
$k$-path Laplacian $\tLM{s}$.  The peak height is attained at $x=0$ and behaves like
\[
  \bigl(u(t)\bigr)_0 \sim
  \begin{cases}
    \dfrac{\Gamma\bigl(\frac{s}{s-1}\bigr)}{\pi\gamma}\,t^{-\frac{1}{s-1}}
    & \text{if}\;\; 1<s<3,
    \\[3ex]
    \dfrac{1}{2\sqrt{\pi\zeta(s-2)}\,}\,t^{-\frac{1}{2}}
    & \text{if}\;\; s>3
  \end{cases}
\]
as $t\to\infty$, where $\gamma$ is as in \eqref{alphagamma1};
see \eqref{peak}.
If $s\in(1,3)$, then the full width at half maximum (FWHM) behaves like
\[
  \FWHM(t) \sim 2\xi_0 t^{\frac{1}{s-1}}
  \qquad\text{as}\;\; t\to\infty,
\]
where $\xi_0>0$ is such that $f(\xi_0;s-1,0,\gamma,0)=\frac{1}{2}f(0;s-1,0,\gamma,0)$;
see \eqref{FWHM}.
This shows that we have superdiffusion in this case since $\frac{1}{s-1}>\frac{1}{2}$\,.
A particular case is when $s=2$, when the (rescaled) limit distribution is a
Cauchy distribution and the FWHM grows linearly, i.e.\ the time evolution shows
ballistic diffusion.
For an interpretation of the limiting behaviour using rescaling in $x$
see Remark~\ref{re:scaling}.
\end{remark}

\begin{remark}
Consider the operator
\begin{equation}\label{fract_power}
  L = cL_1^a
\end{equation}
with $c>0$ and $a\in(0,1)$, i.e.\ $L_1^a$ is a fractional power of the standard
Laplacian $L_1$ defined, e.g.\ by the spectral theorem.
Since the operator $cL_1^a$ is equivalent to the multiplication operator by
\[
  \ell(q) = c\bigl(2(1-\cos q)\bigr)^a
\]
in the Fourier representation, we obtain from Lemma~\ref{le:asymp} that
\[
  \bigl(u(t)\bigr)_x = t^{-\frac{1}{2a}}f\bigl(t^{-\frac{1}{2a}}x;2a,0,c^{\frac{1}{2a}},0\bigr)
  + \rmo\bigl(t^{-\frac{1}{2a}}\bigr) \qquad\text{as}\;\; t\to\infty
\]
when $u$ is a solution of \eqref{evol_equ}, \eqref{evol_equ_ic} with $L$
as in \eqref{fract_power}.  Hence if we choose
\[
  a = \frac{s-1}{2} \qquad\text{and}\qquad c = -\frac{\pi}{\Gamma(s)\cos\bigl(\frac{\pi s}{2}\bigr)}
\]
for $s\in(1,3)$, we obtain the same asymptotic behaviour of $u$ as the solution
in Theorem~\ref{th:asympMellin}.
However, the solutions behave differently for small $t$ as can be seen from
Figure~\ref{fig9} where the blue solid line with circles corresponds to $L=\tLM{s}$
and the red dashed line with squares corresponds to $L=cL_1^a$
for $t=1$ (a) and $t=3$ (b).
See \cite{CLRV15} for a discussion of $L_1^a$ where it was also shown
that \eqref{evol_equ}, \eqref{evol_equ_ic} with $L=L_1^a$ is equivalent
to an evolution equation with a fractional time derivative (\cite[Theorem~3]{CLRV15}).
\end{remark}

\noindent
\textbf{Acknowledgement.}
We would like to thank an anonymous referee for many valuable suggestions.
EH thanks the Ministry of Higher Education and Scientific Research
of Iraq for a PhD Scholarship.

\begin{figure}
\includegraphics[width=0.48\textwidth]{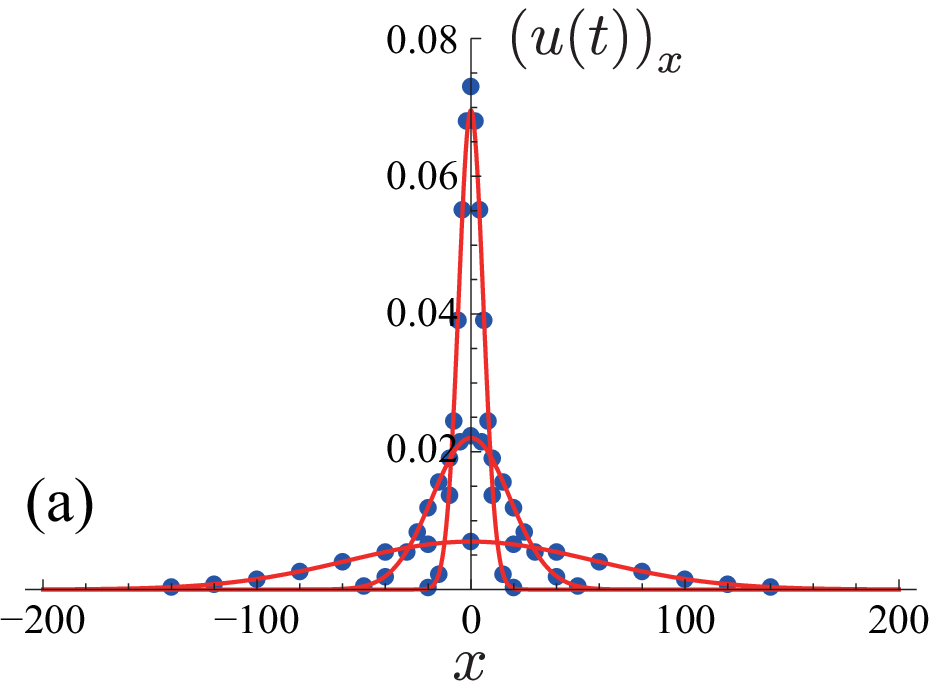}
\hfill
\includegraphics[width=0.48\textwidth]{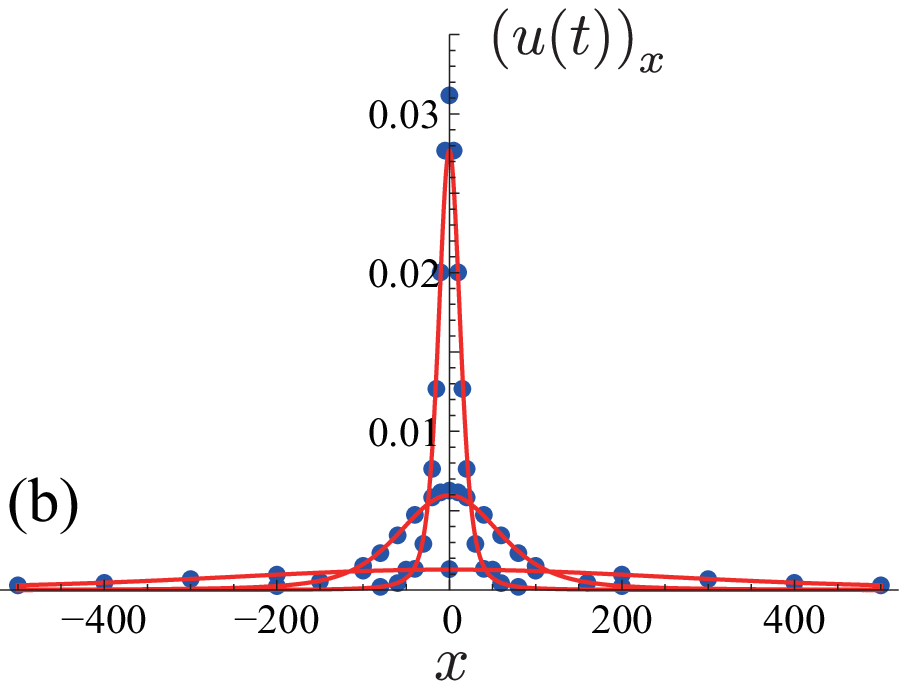}
\\
\vspace{\baselineskip}
\includegraphics[width=0.48\textwidth]{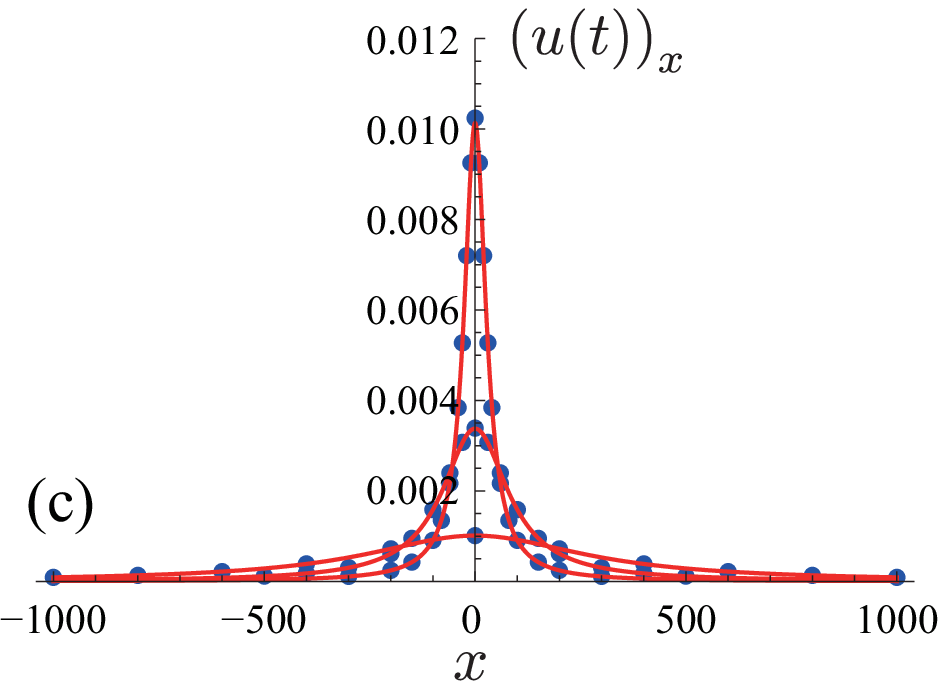}
\hfill
\includegraphics[width=0.48\textwidth]{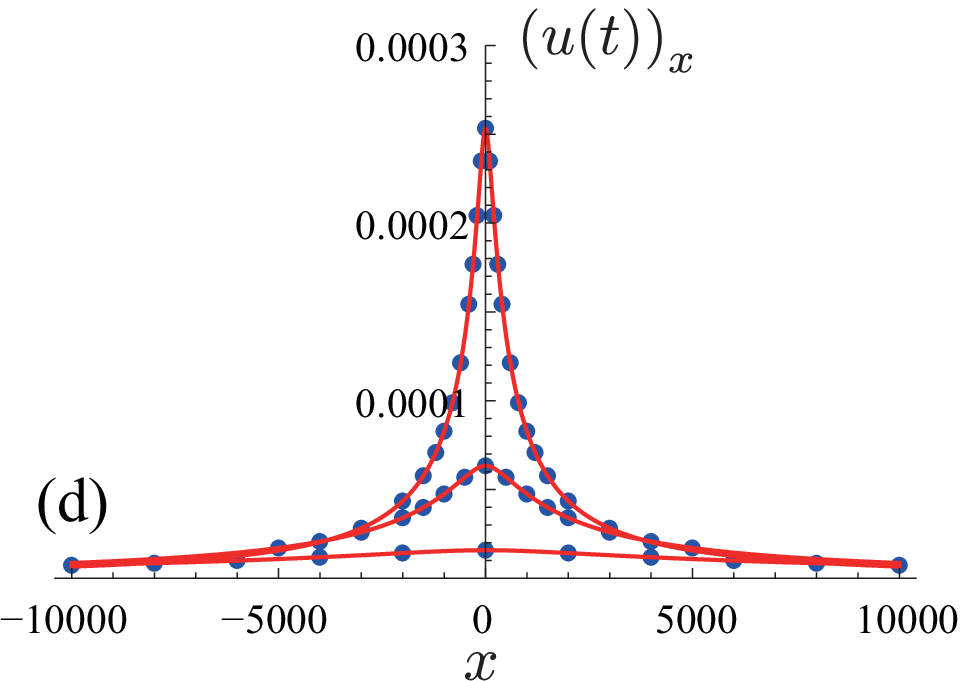}
\caption{The time evolution of the density profile under the
Mellin-transformed $k$-path Laplacian:
(a) $s=4$ for $t=10,100,1000$ from high to low;
(b) $s=2.5$ for  $t=10,100,1000$ from high to low;
(c) $s=2$ for $t=10,30,100$ from high to low;
(d) $s=1.5$ for $t=10,20,40$ from high to low.
In every panel, the blue dots indicate the result of numerical integration
of \eqref{evol_e0} with $\ell=\tellM{s}$,
whereas the red curves indicate the asymptote \eqref{asymp_mellin}.}
\label{fig7}
\end{figure}

\begin{figure}
\centering
\includegraphics[width=0.48\textwidth]{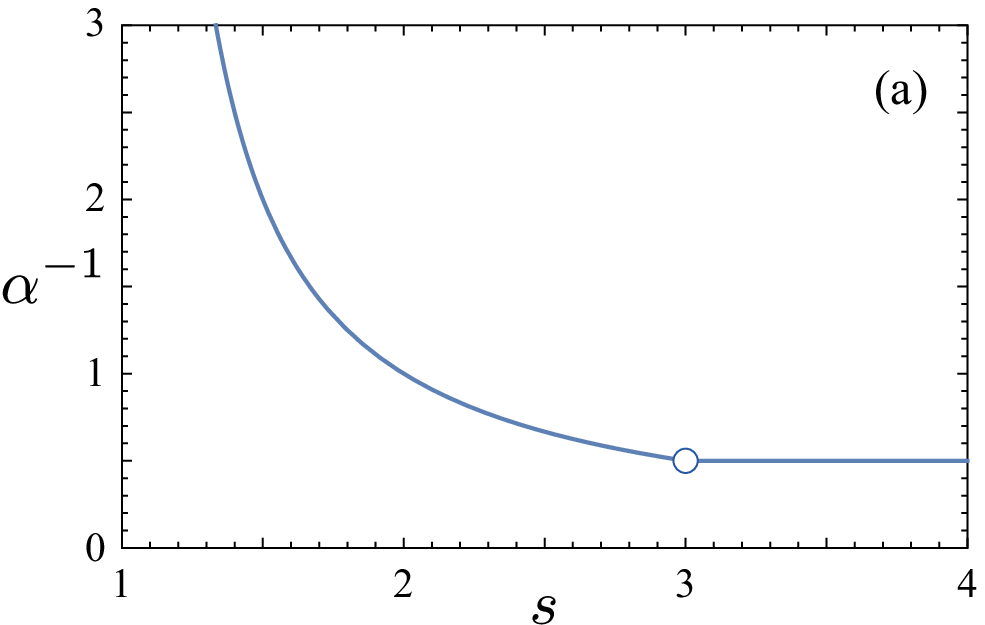}
\hfill
\includegraphics[width=0.48\textwidth]{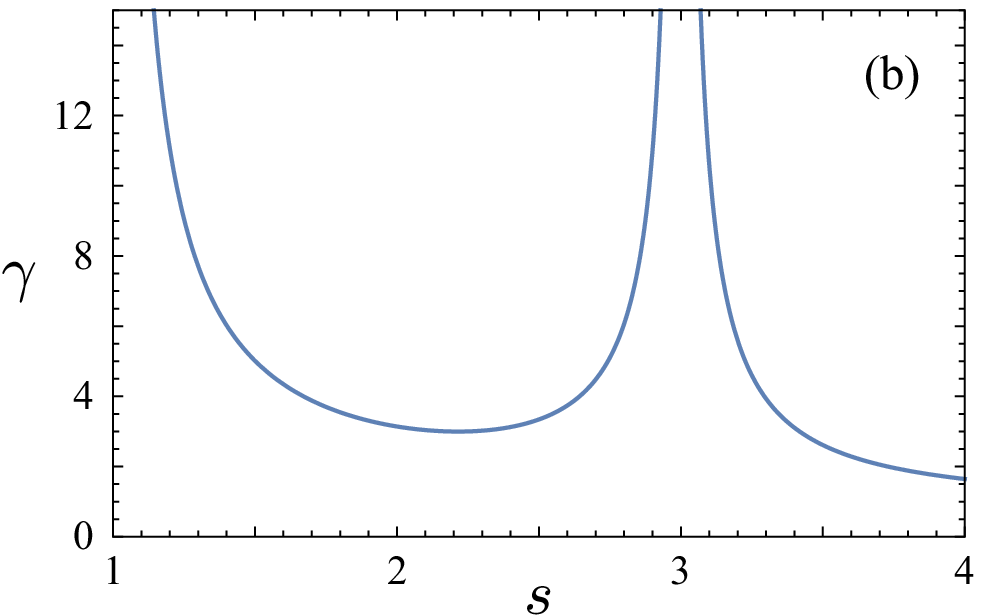}
\caption{The $s$-dependence of (a) $1/\alpha$ and (b) $\gamma$.}
\label{fig8}
\end{figure}

\begin{figure}
\centering
\begin{tabular}{cc}
\includegraphics[width=0.48\textwidth]{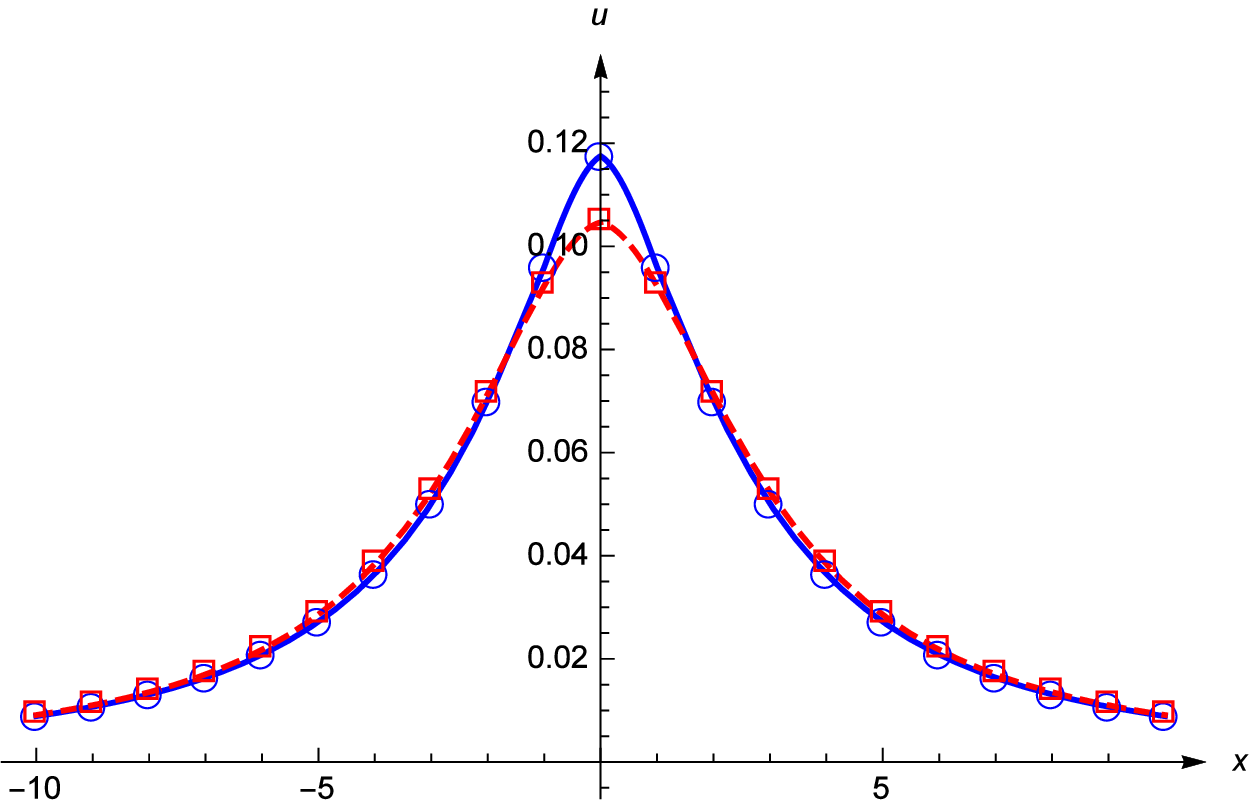}
&
\includegraphics[width=0.48\textwidth]{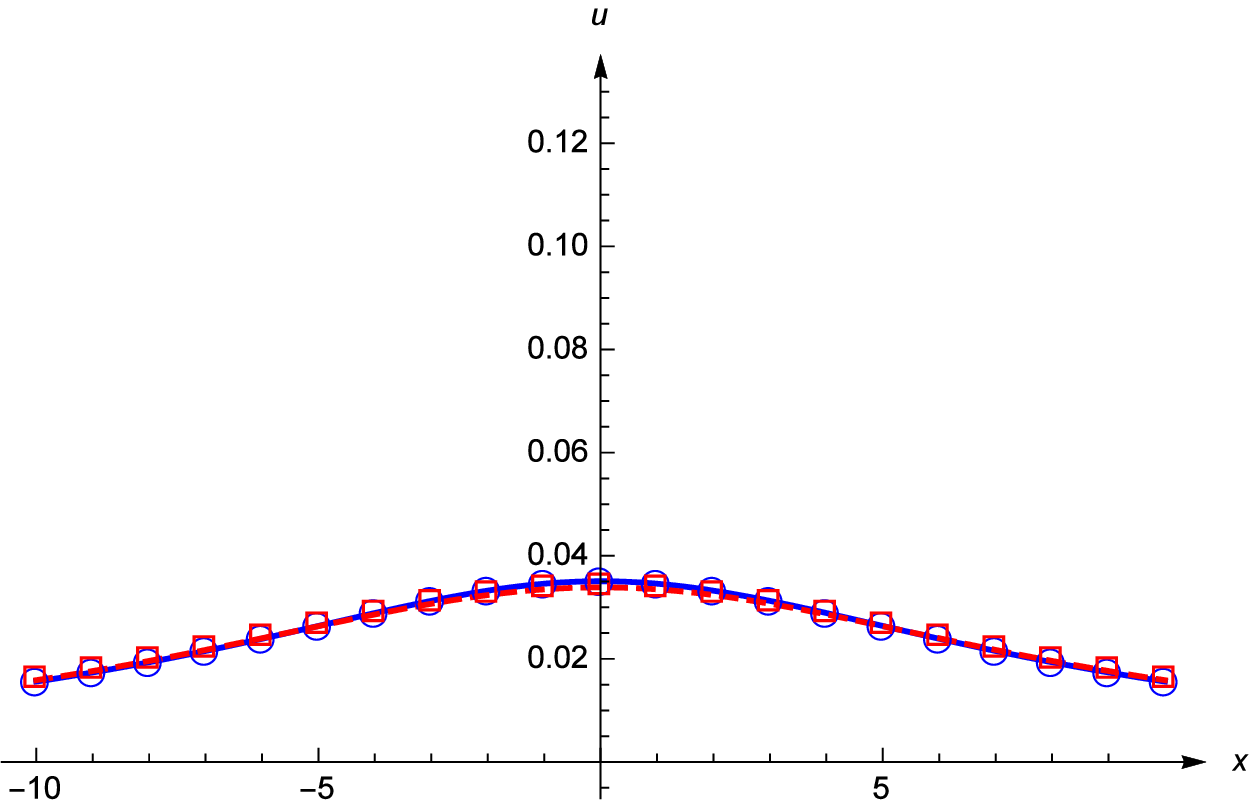}
\\
(a) & (b)
\end{tabular}
\caption{The solutions of \eqref{evol_equ}, \eqref{evol_equ_ic}
for $L=\tLM{s}$ (blue solid line with circles)
and for $L=cL_1^a$ (red dashed line with squares)
for $t=1$ (a) and $t=3$ (b).}
\label{fig9}
\end{figure}



\begin{thebibliography}{99}

\bibitem{ala2002collective}
T.~Ala-Nissila, R.~Ferrando and S.\,C.~Ying,
Collective and single particle diffusion on surfaces.
\textit{Adv.\ in Phys.} 51 (2002), 949--1078.

\bibitem{Fractional Models}
O.\,G.~Bakunin,
\textit{Chaotic Flows}.
Springer Series in Synergetics, vol.~10,
Springer, Berlin Heidelberg, 2011.

\bibitem{Laplacian operator_1}
R.~Carlson,
Adjoint and self-adjoint differential operators on graphs.
\textit{Electron.\ J.\ Differential Equations} 1998 (1998), 1--10.

\bibitem{CGRTV}
\'O.~Ciaurri, T.\,A.~Gillespie, L.~Roncal, J.\,L.~Torrea and J.\,L.~Varona,
Harmonic Analysis associated with a discrete Laplacian.
Preprint: arXiv:1401.2091

\bibitem{CLRV15}
\'O.~Ciaurri, C.~Lizama, L.~Roncal and J.\,L.~Varona,
On a connection between the discrete fractional Laplacian and superdiffusion.
\textit{Appl.\ Math.\ Lett.} 49 (2015), 119--125.

\bibitem{delmotte99}
T.~Delmotte,
Parabolic Harnack inequality and estimates of Markov chains on graphs.
\textit{Rev.\ Mat.\ Iberoamericana} 15 (1999), 181--232.

\bibitem{Anomalous_Levy flights}
A.\,A.~Dubkov, B.~Spagnolo and V.\,V.~Uchaikin,
L\'evy flight superdiffusion: an introduction.
\textit{Internat.\ J.\ Bifur.\ Chaos} 18 (2008), 2649--2672.

\bibitem{path Laplacians}
E.~Estrada,
Path Laplacian matrices: introduction and application to the analysis of
consensus in networks.
\textit{Linear Algebra Appl.} 436 (2012), 3373--3391.

\bibitem{HKLW12}
S.~Haeseler, M.~Keller, D.~Lenz and R.~Wojciechowski,
Laplacians on infinite graphs: Dirichlet and Neumann boundary conditions.
\textit{J.\ Spect.\ Theory} 2 (2012), 397--432.

\bibitem{fractional diffusion_1}
T.\,T.~Hartley and C.\,F.~Lorenzo,
Application of incomplete gamma functions to the initialization of
fractional-order systems.
\textit{J.\ Comput.\ Nonlinear Dynam.} 3 (2008), 021103.

\bibitem{jorgensen08}
P.\,E.\,T.~Jorgensen,
Essential selfadjointness of the graph-Laplacian.
\textit{J.\ Math.\ Phys.} 49 (2008), 073510.

\bibitem{Laplacian operator_5}
P.\,E.\,T.~Jorgensen and E.\,P.\,J.~Pearse,
A Hilbert space approach to effective resistance metric.
\textit{Complex Anal.\ Oper.\ Theory} 4 (2010), 975--1013.

\bibitem{kato}
T.~Kato,
\textit{Perturbation Theory for Linear Operators}.
Springer-Verlag, Berlin, 1995.

\bibitem{Laplcian operator_4}
M.~Keller and D.~Lenz,
Unbounded Laplacians on graphs: basic spectral properties and the heat equation.
\textit{Math.\ Model.\ Nat.\ Phenom.} 5 (2010), 198--224.

\bibitem{Diffusion on graphs}
R.\,I.~Kondor and J.~Lafferty,
Diffusion kernels on graphs and other discrete structures.
In: \textit{Proceedings of the 19th international conference on machine learning},
Morgan Kaufmann Publishers Inc., San Francisco, 2002, pp.~315--322.

\bibitem{Diffusion on graphs 2}
M.~Kramar Fijav\v{z}, D.~Mugnolo and E.~Sikolya,
Variational and semigroup methods for waves and diffusion in networks.
\textit{Appl.\ Math.\ Optim.} 55 (2007), 219--240.

\bibitem{fractional diffusion_2}
C.\,F.~Lorenzo and T.\,T.~Hartley,
Initialization of fractional-order operators and fractional differential equations.
\textit{J.\ Comput.\ Nonlinear Dynam.} 3 (2008), 021101.

\bibitem{fractional diffusion graphs}
J.~Lu, J.~Shen, J.~Cao and J.~Kurths,
Consensus of networked multi-agent systems with delays and fractional-order dynamics.
In: \textit{Consensus and Synchronization in Complex Networks},
Springer, Berlin--Heidelberg, 2013, pp.~69--110.

\bibitem{Laplacian_1}
R.~Merris,
Laplacian matrices of graphs:\ a survey.
\textit{Linear Algebra Appl.} 197/198 (1994), 143--176.

\bibitem{Anomalous diffison review}
R.~Metzler and J.~Klafter,
The random walk's guide to anomalous diffusion: a fractional dynamics approach.
\textit{Physics Reports} 339 (2000), 1--77.

\bibitem{Laplacian_2}
B.~Mohar,
Laplace eigenvalues of graphs---a survey.
\textit{Discrete Math.} 109 (1992), 171--183.

\bibitem{Mugnolo book}
D.~Mugnolo,
\textit{Semigroup Methods for Evolution Equations on Networks}.
Springer, Cham, 2014.

\bibitem{nolan}
J.\,P.~Nolan,
\textit{Stable Distributions -- Models for Heavy Tailed Data.}
In progress, Chapter~1 available oneline on
\texttt{http://auapps.american.edu/jpnolan/www/stable/stable.html};
retrieved on 25/01/2016.

\bibitem{nist_handbook}
\textit{NIST \,Handbook of Mathematical Functions.\/}
Edited by F.\,W.\,J.~Olver, D.\,W. Lozier, R.\,F.~Boisvert and C.\,W.~Clark.
U.S.\ Department of Commerce, National Institute of Standards and Technology, Washington, DC;
Cambridge University Press, Cambridge (2010).
Online version: \texttt{http://dlmf.nist.gov}

\bibitem{Podlubny}
I.~Podlubny,
\textit{Fractional Differential Equations}.
Academic Press, New York, 1999.

\bibitem{reed_simon_IV}
M.~Reed and B.~Simon,
\textit{Methods of Modern Mathematical Physics. IV. Analysis of Operators.}
Academic Press, New York--London, 1978.

\bibitem{cold_atoms}
Y.~Sagi, M.~Brook, I.~Almog and N.~Davidson,
Observation of anomalous diffusion and fractional self-similarity in one dimension.
\textit{Phys.\ Rev.\ Lett.} 108 (2012), 093002.

\bibitem{senft1995long}
D.\,C.~Senft and G.~Ehrlich,
Long jumps in surface diffusion:
one-dimensional migration of isolated adatoms.
\textit{Phys.\ Rev.\ Lett.} 74 (1995), 294.

\bibitem{Laplacian operator_2}
B.~Simon,
Operators with singular continuous spectrum. VI.
Graph Laplacians and Laplace--Beltrami operators.
\textit{Proc.\ Amer.\ Math.\ Soc.} 124 (1996), 1177--1182.

\bibitem{Levy flights graphs}
D.~Volchenkov,
Random walks and flights over connected graphs and complex networks.
\textit{Commun.\  Nonlinear Sci.\ Numer.\ Simul.} 16 (2011), 21--55.

\bibitem{weber10}
A.~Weber,
Analysis of the physical Laplacian and the heat flow on a locally finite graph.
\textit{J.\ Math.\ Anal.\ Appl.} 370 (2010), 146--158.

\bibitem{wojciechowski_PhD_thesis}
R.\,K.~Wojciechowski,
Stochastic completeness of graphs.
PhD Thesis. City University of New York, New York, 2008.

\bibitem{yu2013single}
C.~Yu, J.~Guan, K.~Chen, S.\,C.~Bae and S.~Granick,
Single-molecule observation of long jumps in polymer adsorption.
\textit{ACS Nano} 7 (2013), 9735--9742.


\end{thebibliography}
\end{document}